\theoremstyle{definition}
\newtheorem{thm}{Theorem}[section]
\newtheorem{lem}[thm]{Lemma}
\newtheorem{pro}[thm]{Proposition}
\newtheorem{dfc}[thm]{Definition-Construction}
\newtheorem{assumptions}[thm]{Assumptions}
\newtheorem{df}[thm]{Definition}
\newtheorem{rem}[thm]{Remark}
\newtheorem{exa}[thm]{Example}
\newcommand{\mB}{\mathcal{B}}
\newcommand{\mD}{\mathcal{D}}
\newcommand{\mA}{\mathcal{A}}
\newcommand{\mO}{\mathcal{O}}
\newcommand{\mK}{\mathcal{K}}
\newcommand{\mV}{\mathcal{V}}
\newcommand{\mF}{\mathcal{F}}
\newcommand{\mP}{\mathcal{P}}
\newcommand{\mT}{\mathcal{T}}
\newcommand{\mHom}{\mathcal{H}om}
\DeclareSymbolFont{AMSb}{U}{msb}{m}{n}
\DeclareMathSymbol{\boldk}{\mathord} {AMSb}{"7C}
\newcommand{\git}{/ \hspace{-3pt} / \hspace{-1pt}}
\newenvironment{lineindent}[1]
     {\begin{list}{}
             {\setlength{\leftmargin}{#1}}
             \item[]
     }
     {\end{list}}
\begin{document}
\title{\addtocounter{footnote}{1} On Localization for Quantum Hamiltonian Reductions in Arbitrary Characteristic}
\author{Theodore J. Stadnik, Jr. \footnote{This material is based upon work supported by the National Science 
Foundation under Grant No. 0932078 000, while the author was in 
residence at the Mathematical Science Research Institute (MSRI) in 
Berkeley, California, during the spring of 2013.} \\
	MSRI \\}
\date{}

\maketitle
\begin{abstract}
    \centering
    \begin{minipage}{0.6\textwidth}
For quantum Hamiltonian reductions in arbitrary characteristics, it is known that derived localization holds if and only if the algebra of global sections has finite global dimension.   In this paper we provide an alternative characterization of when derived localization holds:  Derived localization holds if and only if it holds for an explicit finite set of (quantized) line bundles.  As an application, we prove a new result that there are integral weights for which localization holds on in the positive characteristic hypertoric case for $p$ larger than an explicit bound. We also discuss how derived localization is a consequence of a finite number of Morita equivalences.
    \end{minipage}
\end{abstract}

\tableofcontents
\section{Introduction} 
The Beilinson-Bernstein localization theorem developed a relationship between the representation theory of $U(\mathfrak{g})$ and the geometry of the flag variety over the complex numbers \cite{BB}. In general, a localization theorem for an algebra $A$ is a pair of a space $X$ and a sheaf of algebras $\mA$ on $X$ such that (derived) global sections gives an equivalence between the category of $\mA$-modules and $A$-modules.  From the perspective of geometric representation theory, good localization theorems will provide interesting pairs $(X, \mA)$ so that the geometry of $X$ provides extra information about the structure of $\mA$-modules.  Since the introduction of the Beilinson-Bernstein theorem there have been many more examples of other algebras exhibiting localization such as Cherednik algebras \cite{KR} and hypertoric varieties \cite{MVdB}.  It has also been shown that localization occurs in positive characteristic in the Beilinson-Bernstein setting \cite{BMR} and Cherednik algebra settings \cite{BFG}.  There also has been significant progress in understanding localization in general settings \cite{BPW} \cite{MN}.\\

The aforementioned localization theorems are examples of localization theorems for quantum Hamiltonian reductions (QHRs).   For the purposes of this introduction, a QHR will be any (quantized) space $\mathfrak{X}_{\lambda}$ arising from the following process.  Let $Y$ be a (quantized) smooth symplectic variety equipped with an action of a group $G$ and an equivariant map $\mu: Y \rightarrow \mathfrak{g}^*.$   Fix a character $\delta \in X^*(G)$ and $\lambda \in g^*$ and define $\mathfrak{X}_{\lambda} = \mu^{-1}(\lambda)\git_{\delta} G$ to be the (quantized) GIT quotient.   It is a deformation of the commutative space $\mu^{-1}(0)\git_{\delta} G$.  It was first observed by \cite{BMR} in characteristic $p> 0$ and then more generally by \cite{MN} that derived localization in this context is equivalent to the algebra of global sections of the quantized structure sheaf having finite global dimension.  \\

In some instances it is rather straight-forward to quantify those $\lambda \in \mathfrak{g}^*$  for which global dimension of the algebra of global sections is finite; such as in the case of $U(\mathfrak{g})_{\lambda}$ in any characteristic.  Unfortunately, it is not always the case.  For hypertoric enveloping algebras in characteristic $0$, \cite{MVdB} proved the finiteness of global dimension of this algebra by proving localization holds.  This paper has two main goals:
\begin{enumerate}
\item Replace the condition of finite global dimensionality of the global sections by a condition which requires only a finite number of computations.
\item Show that the finite number of conditions found in $1$ allows us to quantify the size of the primes $p$ for which hypertoric localization holds in characteristic $p$.
\end{enumerate}

Before explaining these theorems in more technical nomenclature, we will first consider a more precise set-up.  Suppose $G$ is a connected reductive group acting on a smooth variety $X$.  The $G$-action on $X$ induces a $G$-action on the crystalline sheaf of differential operators $\mD_X$.   Differentiation of this action induces a noncommutative comoment map $\mu^*: U(\mathfrak{g}) \rightarrow \mD_X$. For $\delta \in X^*(G)$ and $\lambda \in \mathfrak{g}^*$ denote by $\mA_{\delta, \lambda}$ the $G$-invariants of the algebra $\mD_X$ (micro)localized at the $\delta$ semi-stable points of $T^*X$ modulo the relation induced by $\mu^*(\theta)-\lambda(\theta) = 0$ for all $\theta \in \mathfrak{g}$.  \\

\textbf{Theorem \ref{mainthm}.}  Under the mild assumptions of \ref{assumptions} the following are equivalent.
\begin{enumerate}
\item Global sections induces an equivalence of triangulated categories
$$D^b(Mod_{l.f.g}(\mA_{\lambda})) \cong D^b(Mod_{f.g.}(U_{\lambda})).$$
\item Enough of the noncommutative  ample line bundles right localize, that is
$$ \mathbb{R}\Gamma(\mA_{\lambda}(\delta^a))\otimes_{U_{\lambda}}^{\mathbb{L}} \mA_{\lambda} \cong \mA_{\lambda}(\delta^a)$$ for all $a \in S$ where $S$ is an explicit finite set of positive integers depending only on the action of $G$ on $X$.
\item The noncommutative ample line bundles $\{\mA_{\lambda}(\delta^a)| a \in S \}$ are in the Karoubian subcategory of $D^b(Mod^{right}_{l.f.g.}(\mA_{\lambda}))$ generated by $\mA_{\lambda}$.
\end{enumerate}

This theorem is then used to prove our main application.\\

\textbf{Theorem \ref{mainapp}.}  If $X = \mathbb{A}^n$ and $G \subset \mathbb{G}_m^n$ a subtorus then there is an explicit bound $M$ determined only by the map of groups $X_*(\mathbb{G}^{n}_m) \rightarrow X_*(\mathbb{G}^{n}_m/G)$ such that whenever $p > M$ there exists $\delta \in X^*(G)$ and $\lambda \in \mathfrak{g}^*(\mathbb{F}_p)$ such that
$$D^b(Mod_{l.f.g}(\mA_{\lambda})) \cong D^b(Mod_{f.g.}(U_{\lambda})).$$
Moreover, an upper bound for the size of $M$ is easy to compute and the set of such integral $\Lambda$ can be expressed from combinatorial data.\\

The main strategy \ref{mainapp} is to use the size of $p$ to force large but finite number of Morita equivalences.  In characteristic $0$ (underived) localization arises as a consequence of an infinite number of Morita equivalences (see \cite[Sec 5]{BPW}).  In our context, it is almost never the case in characteristic $p > 0$ that there are an infinite number of Morita equivalences.  Since the Morita theory approach in \ref{enoughmorita} applies to all QHRs (not just the toric case), it is the author's hope that this technique will be useful in a settings where the finiteness of the global dimension of the algebra of global sections is unknown.\\

The structure of this paper is as follows.  In section $2$ we outline basic information about notation, (micro)localization, and quantum Hamiltonian reductions.
The main theorem is \ref{mainthm} and its main ingredient is \ref{mainlem}.  Equivalent condition $2$ in the main theorem can be interpreted as a quantized version of Borel-Weil-Bott.  In section $3$, we discuss derived localization for quantum Hamiltonian reductions in a very general and characteristic free manner.  Section $4$ contains our examples.  The main examples of this paper are showing that derived localization is a consequence of an finite and explicit number of Morita equivalences; showing that there are integral parameters for which the hypertoric enveloping algebra localizes in characteristic $p >0$; and a short discussion on some results of \cite{BMR}. \\

\textbf{Acknowledgements.} The author would like to thank Tom Nevins for helpful discussions.

\section{Notation}
In these notes, $X$ will be a smooth variety over an algebraically closed field $\boldk$ of \textbf{arbitrary} characteristic equipped with the action of a connected reductive group $G$.  $\mD_X$ will denote the sheaf of crystalline differential operators on $X$.  It is the sheaf of algebras generated by the structure sheaf $\mO_X$ and the tangent sheaf $\mT_X$ subject to the usual commutation and Lie bracket relations.  The reader should consult \cite[1.2]{BMR} for more details. \\

For any algebra $A$ the category $Mod_{f.g.}(A)$ is the category of finitely generated $A$-modules.  For a sheaf of $\mO_Y$-algebras $\mA$ on a scheme $Y$, $Mod_{l.f.g.}(\mA)$ denotes the category of locally finitely generated $\mA$ modules which are quasi-coherent as $\mO_Y$-modules. $Hom(-,-)$ is the set of (global) module homomorphisms while $\mHom(-,-)$ denotes the sheaf of module homomorphisms.

\subsection{Moment maps}
The action of $G$ induces a noncommutative comoment map
$$\mu^*: \mathfrak{g} \rightarrow \mD_X(X)$$
via differentiating the left regular action of $G$ on functions on $X$.  By the universal property of enveloping algebras we can extend $\mu^*$ to a morphism $U(\mathfrak{g}) \rightarrow \mD_X(X)$ and to a morphism $\underline{U(\mathfrak{g})} \rightarrow \mD_X$ where $\underline{U(\mathfrak{g})}$ denotes the constant sheaf with sections $U(\mathfrak{g})$. The algebra $U(\mathfrak{g})$ and the sheaf $\mD_X$ are naturally equipped with PBW filtrations and the morphism $\underline{U(\mathfrak{g})} \rightarrow \mD_X$ is filtered.  Taking the relative spectrum of associated graded sheaves of algebras gives a map
$$\mu: T^*X \rightarrow \mathfrak{g}^*$$ 
which we will refer to as \textit{the moment map}.\footnote{Classically the moment map is defined by a list of properties which will be satisfied by more than one map if $G$ is not semi-simple.}\\

\subsection{Invariants}
For a character $\chi \in X^*(G)$ and a representation $V$ of $G$, $V^{\chi}$ will denote the subspace of $V$ consisting of all $v$ such that $G$ acts on $v$ by the character $\chi$.  When $\chi=1$ is the trivial character then we write $V^{G}$ to denote the $G$ invariants instead of $V^1$.  If $\mV$ is a $G$-equivariant sheaf on $X$ then $\mV^{\chi}$ and $\mV^{G}$ will denote the sheaves of $\chi$ semi-invariant sections and $G$-invariant sections respectively.
\\

\subsection{GIT quotients}\label{gitquot}
For any scheme $Y$ equipped with a $G$ action and $\delta \in X^*(G)$ we will denote the GIT quotient  by
$$Y \git_{\delta} G.$$
In the terminology of \cite{MFK}, $Y \git_{\delta} G$ is the uniform categorical quotient of the points  of $Y$ which are semi-stable for the trivial line bundle equipped with the linearization determined by $\delta$.
We will need the following facts from this construction:
\begin{enumerate}
\item $Y \git_{\delta} G$ is covered by affine open sets $U_{s \neq 0}$ where $s \in \mO_Y^{\delta^n}$ for some $n > 0$.
\item For any $U_{s \neq 0}$ from $1$ we have $\mO_{Y \git_{\delta} G}(U_{s \neq 0}) = \mO_Y(\{y| s(y) \neq 0\})^G$.
\item For the covering $\mathcal{U}=\{U_{s \neq 0}\}$ of $Y \git_{\delta} G$ from $1$, the set $Y^{\delta-uns} = \{ y | s(y) = 0 \text{ for all } U_{s \neq 0} \in \mathcal{U}\}$ is a closed set called the unstable locus.  Its complement is denoted $Y^{\delta-ss}$ and is the open set of semi-stable points.
\end{enumerate}

\subsection{Non-commutative localization}\label{NCloc}
The sheaf $\mO_{T^*X}$ is naturally graded as it is the associated graded sheaf of rings of the sheaf $\mD_X$ equipped with the PBW order filtration.  For any homogeneous element $s \in \mO_{T^*X}(T^*X)$ define $S \subset \mO_{T^*X}(T^*X)$ to be the smallest multiplicative set containing $S$ and $S_{sat} = \{ P \in \mD_X(X) | \sigma(P) \in S  \}$ where $\sigma$ denotes taking the principal symbol.\\

In \cite[2.1]{Li} it was shown that $S_{sat}$ is an Ore set.  Abusing notation we write $\mD(U_{s \neq 0})$ to indicate the localization of $\mD_X(X)$ at $S_{sat}$.  $\mD_X(V_{s \neq 0})$ is equipped with a filtration such that the associated graded ring is $\mO_{T^*X}(\{(x,\xi) | s(x,\xi) \neq 0\})$.  Loosely speaking, the collection of localizations $\{\mD_X(V_{s \neq 0})^G\}$ for $\{U_{s \neq 0}\}$ a covering of $T^*X \git_{\delta} G$ serves as a deformation of the space $T^*X \git_{\delta} G$ and is constructed through a quantized GIT quotient.  We will now make this more explicit but first we make the following notational convention.\\

\textbf{Convention.} If $s_1,...,s_l \in \mO_{T^*X}$ are homogeneous elements we will say a $\mD_X$-module $M$ vanishes on $\{(x,\xi) | s_i((x,\xi))=0 \text{ for all } i\}$ if $M \otimes_{\mD_X}\mD_X(U_{s_i \neq 0}) = 0$ for all $i$.  We will say a sequence of $\mD_X$-modules is exact on $\{(x,\xi) | s_i((x,\xi))=0 \text{ for all } i\}$ if its cohomology sheaves vanish on $\{(x,\xi) | s_i((x,\xi))=0 \text{ for all } i\}$.

\subsection{Quantized GIT Quotients}
 The natural action of $G$ on vector fields of $X$ induces a natural $G$-equivariant structure on $\mD_X$.  If $\delta \in X^*(G)$ and $\lambda \in \mathfrak{g}^*$ we obtain a sheaf of algebras
$$\mA_{\delta,\lambda}(U_{s \neq 0}) = \left(\mD_X(U_{s \neq 0})/\{\mu^*(\Theta)-\lambda(\Theta)\}_{\Theta \in \mathfrak{g}}\mD_X(U_{s \neq 0}) \right)^G$$ where $U_{s \neq 0}$ are the affine open sets covering $T^*X \git_{\delta} G$.  
The algebra $\mA_{\delta,\lambda}$ is a quantized version of the structure sheaf of the variety $\mu^{-1}(0) \git_{\delta} G$.\\

To see this is a ring, first consider that the $G$ action on $\mD_X/\{\mu^*(\Theta)-\lambda(\Theta)\}_{\Theta \in \mathfrak{g}}\mD_X$ differentiates to the action
$$\theta \cdot \overline{P} = \overline{[ \mu^*(\theta), P]}$$
where $\overline{Q}$ denotes passage from $\mD_X$  to the quotient and $[-,-]$ is the commutator bracket.  It follows that if $\overline{P_1}, \overline{P_2} \in \mA_{\delta,\lambda}$ then multiplication $\overline{P_1}\overline{P_2} = \overline{P_1P_2}$ is well-defined.

We  also have the corresponding global object,
$$U_{\lambda} = \Gamma(\mA_{\delta,\lambda}) = \left( \mD_X(X)/\{\mu^*(\Theta)-\lambda(\Theta)\}_{\Theta \in \mathfrak{g}}\mD_X(X) \right)^G,$$

called the enveloping algebra.  The algebra $U_{\lambda}$ does not depend on $\delta$.\\

\subsection{Simplification of localization when $char\boldk = p > 0$}
If $char \boldk = p > 0$ then there is a natural identification of the center with $\iota: \mO_{T^*X} \rightarrow Z(\mD_X)$  as discussed  in \cite[Sec 1]{BMR}. For a homogeneous section  $s \in \mO_{T^*X}(X)$ the ring $\mD_X[\iota(s)^{-1}]$ has the PBW order filtration and its associated graded ring is $\mO_{T^*X}(\{(x,\xi) | s(x,\xi) =0\})$. Thus it is enough to consider $\mD_X[\iota(s)^{-1}]$ instead of  $\mD_X(U_{s \neq 0})$ when $char \boldk =p>0$.  Abusively, in this case we redefine the symbol $\mD_X(U_{s \neq 0})$ as the ring $\mD_X[\iota(s)^{-1}]$.\\

This allows us to explicitly express $\mD_X(U_{s \neq 0})$ as a direct limit of finitely generated submodules when $X$ is affine.  Namely,
$$\mD_X(U_{s \neq 0}) \cong \varinjlim_l \mD_X(X)\iota(s)^{-l}$$
as right $\mD_X(X)$-modules.  Moreover, if $\iota(s)$ has weight $\gamma \in X^*(G)$ and $M$ is a $G$-equivariant $\mD_X$-module then
$$M(U_{s \neq 0})^{G}=\left(M \otimes_{\mD_X} \mD_X(U_{s \neq 0})\right)^G \cong \varinjlim_l M(X)^{\gamma^l}$$
as right $\mD_X(X)$-modules.  This fact will play an important role in our main theorem.\\

\subsection{Twisted modules}
For a character $\chi \in X^*(G)$ we will denote the $\chi$-twisted \textbf{right} $\mA_{\delta,\lambda}$-module (resp. $U_{\lambda}$-module) by
$$ \mA_{\delta,\lambda}(\chi)(U_{s \neq 0}) = \left(\mD_X(U_{s \neq 0})/ \{\mu^*(\Theta)-\lambda(\Theta)-d\chi(\Theta)\}_{\Theta \in \mathfrak{g}}\mD_X(U_{s \neq 0})\right)^{\chi}$$
$$(\text{resp. }U_{\lambda}(\chi) = \left(\mD_X(X)/ \{\mu^*(\Theta)-\lambda(\Theta)-d\chi(\Theta)\}_{\Theta \in \mathfrak{g}}\mD_X(X) \right)^{\chi}.$$

Note this module also carries a left action of $\mA_{\delta,\lambda+d\chi}$ (resp. $U_{\lambda+d\chi}$) which makes it into a $(\mA_{\delta,\lambda+d\chi}, \mA_{\delta,\lambda})$ bimodule (resp. $(U_{\lambda+d\chi}, U_{\lambda})$ bimodule).  As either a left or right module it is locally finitely generated.

\subsection{Comments on invariants}
If $G$ is is linearly reductive then
$$\mA_{\delta,\lambda}(\chi)(U_{s \neq 0}) = \mD_X(U_{s \neq 0})^{\chi}/ \{\mu^*(\Theta)-\lambda(\Theta)-d\chi(\Theta)\}_{\Theta \in \mathfrak{g}}\mD_X(U_{s \neq 0})^{\chi}$$
and
$$U_{\lambda}(\chi) = \mD_X(X)^{\chi}/ \{\mu^*(\Theta)-\lambda(\Theta)-d\chi(\Theta)\}_{\Theta \in \mathfrak{g}}\mD_X(X)^{\chi}.$$

If $G$ is not linearly reductive the formation of quotients and $G$ invariants may not commute because the functor of $G$ invariants need not be exact.  When $G$ acts freely on $\mu^{-1}(0)^{\delta-ss}$ then the functor of $G$-invariants is an exact functor on the category of $G$-equivariant $\mO_{\mu^{-1}(0)^{\delta-ss}}$-modules.\\

\subsection{Comments on moving between characteristics}\label{betweenchars}
Let $K/\mathbb{Q}$ be a finite field extension and $S$ an affine open subset of $Spec(\overline{\mathbb{Z}}^{K})$.    Suppose that the action of $G$ on $X$ admits a model over $S$.  We would like to be able relate the existence of localization theorems for $G_{\overline{\mathbb{Q}}}$ acting on $X_{\overline{\mathbb{Q}}}$ to localization theorems for  $G_{\overline{\mathbb{F}_p}}$ acting on $X_{\overline{\mathbb{F}_p}}$.\\

For $\lambda \in \mathfrak{g}^*_S(S)$ and $\delta \in X^*(G)$ we can form the algebras from the previous section by replacing $\mD_X$ by $\mD_{X_S}$ to obtain algebras
${U_{\lambda}}_S$ and ${\mA_{\delta,\lambda}}_S$.\\

For $\overline{s}$ a geometric point of $S$,  localization holds for ${U_{\lambda}}_{S,\overline{s}} ={U_{\lambda}}_S \otimes_{\mO_S} \mO_{\overline{s}}$ if and only if the global dimension of ${U_{\lambda}}_{S,\overline{s}}$ is finite \cite{BMR} \cite{MN}.  Moreover, in situations considered in this paper, when localization holds the global dimension of ${U_{\lambda}}_{S,\overline{s}}$ is determined by the dimension of variety 
$$\left(\mu^{-1}(0) \git_\delta G \right)_{\overline{s}}$$
which is independent of $\overline{s}$.\\

The proof of the next lemma is an adaptation of \cite[3.4]{BL}.
\begin{lem} Derived  localization holds  (at $\lambda$) over $\overline{\mathbb{Q}}$ and only if it holds over $\overline{\mathbb{F}_p}$ for all but finitely many $p$.  In fact, for any $n \in \mathbb{N}$ the set 
$$T=\{ s \in S | {U_{\lambda}}_{S, s} = {U_{\lambda}}_S \otimes_{\mO_S} \mO_S/m_s \text{ has projective dimension } \leq n \text{ as a bimodule}\}$$
is  open in $S$.  \\
\end{lem}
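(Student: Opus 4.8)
\emph{Reduction to openness of $T$.} The biconditional is a formal consequence of the openness of $T$, so that assertion is the real content. Put $n_0:=\dim\big(\mu^{-1}(0) \git_{\delta} G\big)$; by \cite{BMR},\cite{MN} and the discussion above, for a geometric point $\bar s$ of $S$ derived localization at $\lambda$ holds over $\kappa(\bar s)$ if and only if ${U_{\lambda}}_{S,\bar s}$ has finite global dimension, and in the situations considered here this forces the projective dimension of ${U_{\lambda}}_{S,\bar s}$ as a bimodule to equal $n_0$; moreover projective dimension of a finitely generated module over a noetherian algebra is preserved under a base field extension, so ${U_{\lambda}}_{S,s}$ has bimodule projective dimension $\le n$ over $\kappa(s)=\mO_S/m_s$ if and only if the same holds after passing to $\overline{\kappa(s)}$. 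Since $S$ is the spectrum of a Dedekind domain it is irreducible, its generic point $\eta$ has residue field contained in $\overline{\mathbb{Q}}$, every nonempty open subset of $S$ contains $\eta$, and the complement of such an open subset is a finite set of closed points (lying over finitely many primes $p$). Granting $T$ open and taking $n=n_0$: if localization holds over $\overline{\mathbb{Q}}$ then $\eta\in T$, hence $T$ omits only finitely many closed points and localization holds over $\overline{\mathbb{F}_p}$ for all but finitely many $p$; conversely, if it holds over $\overline{\mathbb{F}_p}$ for all but finitely many $p$ then $T$ contains infinitely many closed points, hence is nonempty, hence contains $\eta$, so localization holds over $\overline{\mathbb{Q}}$.

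\emph{Proof that $T$ is open.} Equip ${U_{\lambda}}_S$ and ${U_{\lambda}}_S^{e}:={U_{\lambda}}_S\otimes_{\mO_S}({U_{\lambda}}_S)^{\mathrm{op}}$ with their order filtrations; the associated graded rings are finitely generated commutative $\mO_S$-algebras, so ${U_{\lambda}}_S$ and ${U_{\lambda}}_S^{e}$ are noetherian and every finitely generated module over them admits a resolution by finitely generated free modules. By generic flatness (applied to $\operatorname{gr}{U_{\lambda}}_S$ and lifted along the filtration) there is a dense open $S'\subseteq S$ over which ${U_{\lambda}}_S$ and ${U_{\lambda}}_S^{e}$ are flat $\mO_{S'}$-modules; because $\mO_S$ is Dedekind the finitely many remaining closed points may be handled identically over the corresponding discrete valuation rings, so it suffices to prove $T\cap S'$ open, and we rename $S'$ as $S$. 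Fix a resolution $P_\bullet\to{U_{\lambda}}_S$ of the diagonal bimodule by finitely generated free ${U_{\lambda}}_S^{e}$-modules and set $\Omega_n=\operatorname{im}(P_n\to P_{n-1})$. By $\mO_S$-flatness of ${U_{\lambda}}_S$ and of the $P_i$, the module $\Omega_n$ is $\mO_S$-flat and, for every $s$, the complex $P_\bullet\otimes_{\mO_S}\kappa(s)$ is still a resolution of ${U_{\lambda}}_{S,s}$ over ${U_{\lambda}}_{S,s}^{e}$ with $n$-th syzygy $\Omega_n\otimes_{\mO_S}\kappa(s)$; hence $s\in T$ if and only if $\Omega_n\otimes_{\mO_S}\kappa(s)$ is projective over ${U_{\lambda}}_{S,s}^{e}$. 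Choosing $0\to R\to Q\to\Omega_n\to 0$ with $Q$ finitely generated free (and, after discarding finitely many further closed points, $R$ finitely generated and $\mO_S$-flat) and a finite free resolution $P'_\bullet\to\Omega_n$ over ${U_{\lambda}}_S^{e}$, projectivity of $\Omega_n\otimes\kappa(s)$ is equivalent to the vanishing of $\operatorname{Ext}^1_{{U_{\lambda}}_{S,s}^{e}}\!\big(\Omega_n\otimes\kappa(s),R\otimes\kappa(s)\big)$, which is the first cohomology of the fibre at $s$ of the bounded complex $Hom_{{U_{\lambda}}_S^{e}}(P'_\bullet,R)$ of finitely generated $\mO_S$-modules, a complex whose formation commutes with $-\otimes_{\mO_S}\kappa(s)$ by the flatness arranged above. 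The classical semicontinuity theorem for the cohomology of such a complex then shows that the locus where this first cohomology vanishes is open; this is precisely the ingredient adapted from \cite[3.4]{BL}. Hence $T$ is open.

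\emph{Main obstacle.} The difficulty is that ${U_{\lambda}}_S$ and ${U_{\lambda}}_S^{e}$ are noncommutative and far from module-finite over $\mO_S$, so the classical quasi-coherent semicontinuity results are not directly available. The workaround is twofold: the order filtration reduces noetherianity and the existence of finite free resolutions to the commutative finitely generated situation; and the fibrewise projectivity criterion for the $n$-th syzygy is translated into the vanishing of a single $\operatorname{Ext}^1$ that is genuinely computed by a bounded complex of finitely generated $\mO_S$-modules with base-change compatibility, at which point ordinary semicontinuity over the one-dimensional base $S$ applies. The bookkeeping of $\mO_S$-flatness of the successive syzygies, which forces the repeated shrinking of $S$, is a secondary nuisance that is harmless: $\mO_S$ is Dedekind, only finitely many closed points are ever removed, these can be reinstated by running the same argument over their local rings, and in any case they do not affect the ``all but finitely many $p$'' conclusion.
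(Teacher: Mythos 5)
Your reduction of the biconditional to openness of $T$ is fine and is essentially implicit in the paper. The problem is in the proof that $T$ is open, specifically in the final semicontinuity step.

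You write that $\operatorname{Hom}_{{U_{\lambda}}_S^{e}}(P'_\bullet,R)$ is ``a bounded complex of finitely generated $\mO_S$-modules'' to which ``the classical semicontinuity theorem'' applies. But each term of that complex is $\operatorname{Hom}_{{U_{\lambda}}_S^{e}}(P'_i,R)\cong R^{r_i}$, and $R$ is a finitely generated ${U_{\lambda}}_S^{e}$-module, \emph{not} a finitely generated $\mO_S$-module: ${U_{\lambda}}_S^{e}$ is not module-finite over $\mO_S$. So the complex is not $\mO_S$-coherent, its cohomology $H^1$ is not $\mO_S$-coherent, and the standard argument (support of a coherent sheaf is closed, so the vanishing locus of the fibres is open via Nakayama and cohomology-and-base-change) does not run. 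This is exactly the noncommutativity obstacle you flag in your last paragraph, but the translation into an $\operatorname{Ext}^1$ does not remove it; the $\operatorname{Ext}^1$ is a finitely generated ${U_{\lambda}}_S^{e}$-module and nothing more. You would also need to address cohomology-and-base-change for this complex over the Dedekind base (a $\operatorname{Tor}_1$ term against $H^2$ appears in the universal-coefficients sequence), which is a secondary but real issue.

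The paper's proof takes a genuinely different route precisely to avoid this. Rather than asserting a semicontinuity statement, it fixes $s_0\in T$, uses projectivity of $F^{-n}$ to lift the fibrewise splitting $(K_n)_{s_0}\oplus F^{-n}_{s_0}/K_n\cong F^{-n}_{s_0}$ to a single global filtered map $\phi\colon F^{-n}\to K_n\oplus F^{-n}/K_n$ of ${U_{\lambda}}_S^{e}$-modules, and then passes to associated graded for a good filtration. The point is that $\operatorname{gr}({U_{\lambda}}_S)$ is a finitely generated \emph{commutative} $\mO_S$-algebra, $\operatorname{gr}(\phi)$ is a map of finitely generated modules over it, and $\operatorname{Spec}\operatorname{gr}({U_{\lambda}}_S)\to S$ is flat and of finite type, hence open; Nakayama plus a $\operatorname{Tor}$ long exact sequence against the flat module $\operatorname{gr}(K_n\oplus F^{-n}/K_n)$ gives that $\operatorname{gr}(\phi)$ is an isomorphism on the whole fibre over $s_0$, and openness of the flat map then produces the required neighbourhood. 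That filtered/graded reduction to a finitely generated commutative $\mO_S$-algebra is the missing ingredient in your argument; without it the appeal to semicontinuity has no footing.
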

\begin{proof} We will assume the set $T$ is not empty.   Consider that ${U_{\lambda}}_S$ is flat as an $\mO_S$-algebra as it is $\mO_S$-torsion free and $\mO_S$ is Dedekind.  Thus if $F^{\bullet}$ is a resolution of the ${U_{\lambda}}_S^e$-module ${U_{\lambda}}_S$ by finitely generated free ${U_{\lambda}}_S^e$ modules then $F^{\bullet}_s = F^{\bullet} \otimes_{\mO_S} \mO_S/m_s$ is a resolution of ${U_{\lambda}}_{S,s}$ by free ${U_{\lambda}}_{S,s}^e$ modules.  Let $K_n$ denote the $n^{th}$ syzygy module of $F^{\bullet}$. $T$ is not empty so we may choose some $s_0 \in T$ so that $(K_n)_{s_0} \oplus F_{s}^{-n}/K_n \cong F_s^{-n}$ as ${U_{\lambda}}_S^e$-modules.  Using that $F^{-n}$ is a projective ${U_{\lambda}}_S^e$-module we can lift this isomorphism to map
$$\phi: F^{-n} \rightarrow K_n \oplus F^{-n}/K_n.$$

We want to prove this map is an isomorphism on an open subset of $S$.  Both objects can be equipped with a good filtration for the graded algebra ${U_{\lambda}}_S$ and $\phi$ is a filtered map after shifting the filtration on the target.  By  moving to an open subset $V \subset S$ containing $s_0$ and shifting the filtration on the target we may assume $Gr(\phi) \neq 0$ at the stalk at $s_0$.    The morphism $\mO_S \rightarrow Gr({U_{\lambda}}_S)$ is flat and hence open.  Therefore the  image of the open set set $\{P \in Spec(Gr({U_{\lambda}}_S))_V | Gr(\phi)_P  \text{ is an isomorphism at the stalk of } P\}$  is an open set in $V$ that contains $s_0$.  To prove the latter statement, Nakayama's lemma implies that $Gr(\phi)_P$ is surjective for any $P$ over $s_0$ but $Gr(K_n \oplus F^{-n}/K_n)$ is flat over $S$ so stalk of $Kern(Gr(\phi))$ is $0$ by long exact sequence of $Tor(-,\mO_V/P)$. 

\end{proof}

\section{Generalities on derived localization for QHRs}
Recall $G$ is a reductive group acting on a smooth variety $X$ and $\delta \in X^*(G)$.  We make the following assumptions in this section.
\begin{assumptions}\label{assumptions}\

\begin{enumerate}

\item[A1.] $\lambda = d\Lambda$ for $\Lambda \in X^*(G)$.
\item[A2.]  $\mathbb{R}\Gamma(\mA_{\delta,\lambda}(\delta^m)) = \mathbb{R}Hom_{\mA_{\delta,\lambda}}(\mA_{\delta,\lambda},\mA_{\delta,\lambda}(\delta^m)) = U_{\lambda}(\delta^m)$ for all $m \geq 0$. \footnote{In characteristic $p$ this may require $p$ to be large.  For example in the case when $G=T$ is the maximal torus of  a semi-simple reductive group $G'$ acting on $X=G'/U$ one must assume that $p$ is ``very good''.}
\item[A3.] $X$ is affine of dimension $n$.
\item[A4.] $G$ acts freely on $\mu^{-1}(0)^{\delta-ss}$ and hence $\mu^{-1}(0)\git_{\delta} G$ is smooth.
\item[A5.] For the affine open sets $U_{f_i \neq 0}$ defining $\mA_{\delta,\lambda}$ we have an isomorphism of \textbf{right} $U_{\lambda}$-modules
$\mA_{\delta,\lambda}(U_{f_i \neq 0}) \cong \varinjlim_l U_{\lambda}(\delta^{lm_i})$ \footnote{This assumption is always true in characteristic $p > 0$ by the discussion in 2.5 and is true for hypertoric varieties in characteristic $0$.  In many  examples in characteristic $0$, the action of $G$ on $X$ admits a model over (an extension of) $\mathbb{Z}$ and in this case one can appeal to \ref{betweenchars}.  }

\end{enumerate}
\end{assumptions}

\subsection{The Koszul complex}

Fix $\tilde{f}_1,...,\tilde{f}_l \in \mD_X(X)$ such that
\begin{enumerate}
\item $G$ acts on $\tilde{f}_i$ by $\delta^{m_i}$ and $m_{i-1} | m_i$.
\item If $f_1,....,f_n \in \mO_{T^*X}$ denote the principal symbols of $\tilde{f}_1,...,\tilde{f}_n$ then the ideal defining the unstable locus $T^*X^{\delta-uns}$ is generated by $f_1,...,f_n$ up to radical.
\end{enumerate}

Notice that because $T^*X$ is an affine Noetherian space the ideal defining the unstable locus $T^*X^{\delta-uns}$ may be generated by a finite number of sections $\{s_{\alpha}\}$ with $\{U_{s_{\alpha} \neq 0}\}$  covering $T^*X\git_{\delta} G$ as in \ref{gitquot} .  This proves that at least one such collection $\tilde{f}_1,...,\tilde{f}_l \in \mD_X(X)$  exists.
\begin{dfc}\label{Koszulcomplex}(The $m$-shifted Koszul complex)\

Fix an integer $m$ and consider the Koszul sequence of right $\mD$-modules determined via left multiplications  by the sequence $\tilde{f}_1,...,\tilde{f}_l$.
$$0 \rightarrow \mD_X[m] \rightarrow .... \rightarrow \oplus_{j} \mD_X[m + \sum_{i \neq j} m_i] \rightarrow \mD_X[m+\sum_i m_i] \rightarrow 0$$
where $\mD_X[a]$ is the sheaf $\mD_X$ with the $G$ action twisted by $\delta^a$.\\

The cohomology sheaves of this complex are supported on the unstable locus \cite[17.13]{Eisenbud}.  Using the convention from \ref{NCloc}, the complex is exact on $T^*X^{\delta-ss}$.  Moreover, $(\{\mu^*(\Theta) -\lambda(\Theta)\}_{\Theta \in \mathfrak{g}})$ is generated by a regular sequence and $G$ acts freely on $\mu^{-1}(0)^{\delta-ss}$ so specializing along the moment map and taking invariants gives an exact complex of right $\mA_{\delta,\lambda}$-modules

$$K^{\bullet}(m): 0 \rightarrow \mA_{\delta,\lambda}(\delta^m) \rightarrow ... \rightarrow \mA_{\delta,\lambda}(\delta^{m + \sum_i m_i}) \rightarrow 0.$$  The complex $K^{\bullet}(m)$ is indexed so that the term $\mA_{\delta,\lambda}(\delta^{m + \sum_i m_i})$ is in degree zero.\\
\end{dfc}

\begin{pro}(\cite{BMR}, \cite{MN})\label{perflem} The functor of derived global sections gives an equivalence of categories
$$D^b_{gl. free}(Mod_{l.f.g}(\mA_{\delta,\lambda})) \cong D^b_{perf}(Mod_{f.g.}(U_{\lambda}))$$
where the category on the left side is the full subcategory of the derived category consisting of bounded complexes of globally free $\mA_{\delta,\lambda}$-modules.\\
\end{pro}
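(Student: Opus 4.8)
The plan is to produce an explicit quasi-inverse to $\mathbb{R}\Gamma$ and then reduce the whole statement to hypothesis A2 by a thick-subcategory argument, in the spirit of \cite{MN} (and of \cite{BMR} in characteristic $p$). Write $\mA=\mA_{\delta,\lambda}$ and $U=U_{\lambda}=\Gamma(\mA)$, and let $Y=\mu^{-1}(0)\git_{\delta}G$, which is smooth by A4. Global sections $\Gamma\colon Mod_{l.f.g.}(\mA)\to Mod(U)$ has the localization functor $\mathrm{Loc}(-)=\mA\otimes_{U}(-)$ as a left adjoint (tensor--hom adjunction, using $\Gamma=\mathrm{Hom}_{\mA}(\mA,-)$); deriving yields an adjoint pair $(\mathbb{L}\mathrm{Loc},\mathbb{R}\Gamma)$ with unit $\eta\colon\mathrm{id}\Rightarrow\mathbb{R}\Gamma\,\mathbb{L}\mathrm{Loc}$ and counit $\varepsilon\colon\mathbb{L}\mathrm{Loc}\,\mathbb{R}\Gamma\Rightarrow\mathrm{id}$. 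Here $\mathbb{R}\Gamma$ is defined on all of $D^{b}(Mod_{l.f.g.}(\mA))$ because $Y$ is separated and Noetherian, so $\Gamma$ has finite cohomological dimension, and, in the situations under consideration, $Y$ is projective over $\mathrm{Spec}\,U$, so $\mathbb{R}\Gamma$ preserves coherence; $\mathbb{L}\mathrm{Loc}$ is defined on $D^{-}(U)$ and restricts to perfect complexes. Moreover $U$ is Noetherian (A3 and reductivity of $G$, via the PBW filtration), so $D^{b}_{perf}(Mod_{f.g.}(U))$ is the thick subcategory of $D^{b}(Mod_{f.g.}(U))$ generated by $U$; I read $D^{b}_{gl.\,free}(Mod_{l.f.g.}(\mA))$ correspondingly as the thick subcategory generated by $\mA$, i.e.\ bounded complexes of globally free $\mA$-modules together with their direct summands.

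The only non-formal input is this consequence of A2: $\mathbb{L}\mathrm{Loc}(U)=\mathrm{Loc}(U)=\mA$ tautologically, $\mathbb{R}\Gamma(\mA)=U$ is the case $m=0$ of A2, and under these identifications the unit $\eta_{U}\colon U\to\mathbb{R}\Gamma\,\mathbb{L}\mathrm{Loc}(U)$ is the identity of $U$. Since $\mathbb{R}\Gamma$ and $\mathbb{L}\mathrm{Loc}$ are triangulated, induction on the number of mapping cones shows that $\mathbb{R}\Gamma$ carries the thick subcategory generated by $\mA$ into that generated by $U$ and that $\mathbb{L}\mathrm{Loc}$ carries the thick subcategory generated by $U$ into that generated by $\mA$; thus $\mathbb{R}\Gamma\colon D^{b}_{gl.\,free}(\mA)\to D^{b}_{perf}(U)$ and $\mathbb{L}\mathrm{Loc}\colon D^{b}_{perf}(U)\to D^{b}_{gl.\,free}(\mA)$ are well defined.

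It then remains to show these are mutually quasi-inverse. The full subcategory of $D^{b}(Mod_{f.g.}(U))$ on which $\eta$ is an isomorphism is thick (a natural transformation between triangulated functors always has this property), and it contains $U$ by the previous step, so it is all of $D^{b}_{perf}(U)$; hence $\eta$ is an isomorphism there. By the triangle identity $\varepsilon_{\mathbb{L}\mathrm{Loc}(U)}\circ\mathbb{L}\mathrm{Loc}(\eta_{U})=\mathrm{id}$, and since $\eta_{U}$ is an isomorphism, the counit $\varepsilon_{\mA}=\varepsilon_{\mathbb{L}\mathrm{Loc}(U)}$ is an isomorphism, so the thick subcategory on which $\varepsilon$ is an isomorphism contains $\mA$ and hence is all of $D^{b}_{gl.\,free}(\mA)$. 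This exhibits $\mathbb{R}\Gamma$ and $\mathbb{L}\mathrm{Loc}$ as inverse equivalences, which is the assertion. (One can also prove full faithfulness directly: for a bounded globally free complex $Q^{\bullet}$, $\mathbb{R}\mHom_{\mA}(\mA,Q^{\bullet})=\mathbb{R}\Gamma(Q^{\bullet})$ since $\mA$ is a projective $\mA$-module and the $\mExt$ sheaves vanish, and a d\'evissage in the first variable reduces $\mathrm{Hom}_{D(\mA)}(P^{\bullet},Q^{\bullet})$ and $\mathrm{Hom}_{D(U)}(\mathbb{R}\Gamma P^{\bullet},\mathbb{R}\Gamma Q^{\bullet})$ to the same iterated extensions of the groups $H^{*}\mathbb{R}\Gamma(Q^{\bullet})$; essential surjectivity is witnessed by $\mathbb{L}\mathrm{Loc}$ of a bounded complex of finitely generated free $U$-modules.)

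I expect the main obstacle in a self-contained treatment to be not the equivalence --- which is formal once one has $\mathbb{R}\Gamma(\mA)=U$ --- but the preliminary finiteness bookkeeping: checking that $\mathbb{R}\Gamma$ of a bounded complex of locally finitely generated $\mA$-modules is again bounded with finitely generated $U$-cohomology (using A3, A4, and the projectivity of $Y$ over $\mathrm{Spec}\,U$ in the cases at hand), and that $U$ is Noetherian. The single substantive geometric fact entering the equivalence is $\mathbb{R}\Gamma(\mA)=U$, which is exactly assumption A2.
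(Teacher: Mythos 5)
The paper does not prove this proposition; it is cited from \cite{BMR} and \cite{MN} without argument. Your reconstruction is the standard formal proof underlying those references, so there is no internal proof in the paper to compare against, but your argument is correct and is indeed the argument one would extract from \cite{MN}: set up the adjunction $(\mathbb{L}\mathrm{Loc},\mathbb{R}\Gamma)$, observe that A2 makes the unit an isomorphism on the generator $U$, and then propagate across the thick subcategories generated by $U$ and by $\mA$ using the five lemma and the triangle identities.

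Two small remarks worth making explicit if you were to write this up. First, you implicitly identify $D^{b}_{gl.\,free}(Mod_{l.f.g.}(\mA))$ with the \emph{thick} (i.e.\ Karoubi-closed) subcategory generated by $\mA$; the paper's phrasing ``bounded complexes of globally free $\mA$-modules'' names only the generating objects, and the thick closure is what actually matches $D^{b}_{perf}(U)$ on the other side, so this reading is the intended one. Second, the step ``the subcategory on which $\eta$ is an isomorphism is thick'' needs that $\mathbb{R}\Gamma\circ\mathbb{L}\mathrm{Loc}$ is a well-defined triangulated endofunctor of the category in question; this is exactly why you must work inside $D^{b}_{perf}(U)$ rather than all of $D^{b}(Mod_{f.g.}(U))$, since on perfect complexes $\mathbb{L}\mathrm{Loc}$ lands in bounded globally free complexes and $\mathbb{R}\Gamma$ returns bounded perfect complexes (the latter using A2 for acyclicity of the terms, not just for the degree-zero identification $\Gamma(\mA)=U$). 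You mention both points, but in a cleaner write-up it would be worth flagging that the acyclicity half of A2, $\mathbb{R}^{i}\Gamma(\mA)=0$ for $i>0$, and not merely the identification $\Gamma(\mA)=U$, is what makes $\eta_U$ an isomorphism in the derived (as opposed to abelian) sense.
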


\begin{lem}\label{mainlem}(\textit{Alternative criteria for boundedness of localization functor.}) \\
Let $m_1,...,m_l$ be as in \ref{Koszulcomplex}.
If for all $1 \leq a < \frac{\sum_i m_i}{m_1}$  the right $\mA_{\delta,\lambda}$ module $\mA_{\delta,\lambda}(\delta^{am_1})$ is in the Karoubian triangulated subcategory of $D^b(Mod_{l.f.g}^{right}(\mA_{\delta,\lambda}))$ generated by $\mA_{\delta,\lambda}$ (as a right module over itself) then the left localization functor 
$$\mA_{\delta,\lambda} \otimes_{U_{\lambda}}^{\mathbb{L}} - : D^b(Mod_{f.g.}(U_{\lambda})) \rightarrow D^-(Mod_{l.f.g}(\mA_{\delta,\lambda}))$$
restricts to a functor $D^{[b,c]}(Mod_{f.g.}^{left}(U_{\lambda})) \rightarrow D^{[b-4n-1,c]}(Mod_{l.f.g.}^{left}(\mA_{\delta,\lambda}))$ for all $b < c$.  In particular, the localization functor is bounded.
\end{lem}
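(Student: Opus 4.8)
The plan is to show that the left-localization functor $\mA_{\delta,\lambda}\otimes^{\mathbb L}_{U_\lambda}-$ sends any finitely generated $U_\lambda$-module to a complex of $\mA_{\delta,\lambda}$-modules with cohomology concentrated in a fixed range of length $4n+1$, and then extend this to bounded complexes by the usual hypercohomology-spectral-sequence / way-out argument. Since every finitely generated module admits a (possibly infinite) resolution by free modules $U_\lambda^{\oplus r}$, and $\mA_{\delta,\lambda}\otimes^{\mathbb L}_{U_\lambda}U_\lambda^{\oplus r}=\mA_{\delta,\lambda}^{\oplus r}$ sits in degree $0$, the content is entirely about controlling how far to the left one must go before the complex becomes acyclic; equivalently it is a statement about the projective dimension of $U_\lambda$-modules after applying $\mA_{\delta,\lambda}\otimes_{U_\lambda}-$, i.e.\ really a global-dimension-type bound for $U_\lambda$ seen through $\mA_{\delta,\lambda}$.

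The key geometric input is the $m$-shifted Koszul complex $K^\bullet(m)$ from \ref{Koszulcomplex}: it is an exact complex of right $\mA_{\delta,\lambda}$-modules of length $l$ relating the twists $\mA_{\delta,\lambda}(\delta^{m})$ through $\mA_{\delta,\lambda}(\delta^{m+\sum_i m_i})$. The hypothesis of the lemma says precisely that every twist $\mA_{\delta,\lambda}(\delta^{am_1})$ with $1\le a<\sum_i m_i/m_1$ lies in the Karoubian triangulated subcategory $\langle\mA_{\delta,\lambda}\rangle$ generated by $\mA_{\delta,\lambda}$. First I would observe that, by the divisibility $m_{i-1}\mid m_i$, every exponent appearing in $K^\bullet(m)$ is a multiple of $m_1$, so splicing together the Koszul complexes $K^\bullet(m)$ for $m$ running over multiples of $m_1$ expresses $\mA_{\delta,\lambda}(\delta^{k m_1})$ for every $k\ge 0$ as an iterated cone built from copies of $\mA_{\delta,\lambda}$ together with the finitely many ``small'' twists $\mA_{\delta,\lambda}(\delta^{am_1})$, $1\le a<\sum_i m_i/m_1$ — and those small twists are in $\langle\mA_{\delta,\lambda}\rangle$ by hypothesis. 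Hence \emph{every} nonnegative twist $\mA_{\delta,\lambda}(\delta^{km_1})\in\langle\mA_{\delta,\lambda}\rangle$, and by the same token every $\mA_{\delta,\lambda}(\delta^{b})$ with $b\ge 0$ (absorbing the residue $b\bmod m_1$ into one more application of a short Koszul complex) lies in $\langle\mA_{\delta,\lambda}\rangle$, hence has a perfect resolution.

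Next I would feed this into localization. Applying $\mathbb R\Gamma$ to $K^\bullet(m)$ and using assumption A2 (that $\mathbb R\Gamma(\mA_{\delta,\lambda}(\delta^m))=U_\lambda(\delta^m)$ concentrated in degree $0$ for $m\ge 0$), exactness of $K^\bullet(m)$ together with the hypercohomology spectral sequence forces $\mathbb R\Gamma$ of the relevant twists to be modules, and combined with \ref{perflem} — which identifies $\mathbb R\Gamma$ restricted to globally free complexes with an equivalence onto $D^b_{perf}(U_\lambda)$ — one gets that the adjunction unit $U_\lambda(\delta^b)\to\mathbb R\Gamma(\mA_{\delta,\lambda}(\delta^b))$ and co-unit $\mA_{\delta,\lambda}\otimes^{\mathbb L}_{U_\lambda}\mathbb R\Gamma(-)\to\mathrm{id}$ behave well on the twists, so that the twists $U_\lambda(\delta^b)$ themselves, as left $U_\lambda$-modules, have bounded projective dimension and $\mA_{\delta,\lambda}\otimes^{\mathbb L}_{U_\lambda}U_\lambda(\delta^b)=\mA_{\delta,\lambda}(\delta^b)$ (in degree $0$). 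The point is then to bound the projective dimension of an arbitrary finitely generated $U_\lambda$-module $M$ in terms of the $U_\lambda(\delta^b)$: here I would use assumption A5, $\mA_{\delta,\lambda}(U_{f_i\neq 0})\cong\varinjlim_l U_\lambda(\delta^{lm_i})$, and the Čech complex for the cover $\{U_{f_i\neq 0}\}$ of $T^*X\git_\delta G$ — a space covered by (at most) $n+1$ affines, since $\dim T^*X\git_\delta G=2n-\dim G\le 2n$ but more sharply one arranges $l\le n+1$ charts — to resolve $\mA_{\delta,\lambda}$ itself, or rather to compute $\mathbb R\Gamma(\mA_{\delta,\lambda}\otimes^{\mathbb L}_{U_\lambda}M)$ via a Čech complex whose terms are (filtered colimits of) $\mA_{\delta,\lambda}(\delta^{b})$-type modules on affine charts where localization is exact. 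Chasing the Čech spectral sequence and the Koszul-cone expression through, one finds that $\mA_{\delta,\lambda}\otimes^{\mathbb L}_{U_\lambda}M$ has cohomology in degrees $[-4n-1,0]$ for $M$ a module; the $4n+1$ arises as (roughly) the Čech length $\le n+1$ plus twice the ``local cohomological dimension'' bound $\le 2n$ coming from iterating the length-$l$ Koszul complexes through $\dim T^*X=2n$, plus the $+1$ shift. Finally, for a complex $M^\bullet\in D^{[b,c]}$ the standard spectral sequence $E_1^{p,q}=H^q(\mA_{\delta,\lambda}\otimes^{\mathbb L}_{U_\lambda}M^p)\Rightarrow H^{p+q}(\mA_{\delta,\lambda}\otimes^{\mathbb L}_{U_\lambda}M^\bullet)$ shows the cohomology lands in $[b-4n-1,c]$, giving boundedness.

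The main obstacle, and the step I would spend the most care on, is the bookkeeping that turns the Karoubian-generation hypothesis into a uniform, $\lambda$-independent \emph{numerical} bound ($4n+1$) on the homological width: one has to combine (i) the fact that iterating the length-$l$ Koszul complex through $2n = \dim T^{*}X$ terminates because higher twists eventually have perfect resolutions of controlled length, (ii) the Čech resolution over $\le n+1$ charts, and (iii) the colimit description in A5 — which introduces the subtlety that $\varinjlim$ is exact but one must check the perfect resolutions assemble compatibly, and that locally finitely generated quasi-coherence is preserved — without losing control of the degree count. Getting the constant exactly $4n+1$ (rather than merely ``some explicit constant'') requires being careful that the two contributions ($2n$ from the symplectic dimension and $n+1$ from the number of GIT charts, plus $1$) do not multiply but add, which is where one uses that the Koszul-complex splicing produces a genuine iterated cone rather than a spectral sequence with unbounded filtration.
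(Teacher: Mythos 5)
Your opening moves match the paper's: the iterated-cone/Koszul-splicing argument showing every twist $\mA_{\delta,\lambda}(\delta^{jm_1})$ lands in $\langle\mA_{\delta,\lambda}\rangle^{right}$ is exactly the paper's Claim 1, and invoking A2 together with Proposition~\ref{perflem} to identify $U_\lambda(\delta^{jm_1})$ with a perfect complex and to get $U_\lambda(\delta^{jm_1})\otimes^{\mathbb L}_{U_\lambda}\mA_{\delta,\lambda}\cong\mA_{\delta,\lambda}(\delta^{jm_1})$ is also right. The closing move — that A5 presents each $\mA_{\delta,\lambda}(U_{f_i\neq 0})$ as $\varinjlim_l U_\lambda(\delta^{lm_i})$, so a uniform bound on the right flat dimension of the twists $U_\lambda(\delta^{jm_1})$ bounds the right flat dimension of $\mA_{\delta,\lambda}$ and hence the amplitude of $\mA_{\delta,\lambda}\otimes^{\mathbb L}_{U_\lambda}-$ — is also the paper's.

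The genuine gap is the middle: you never actually produce a \emph{uniform} bound on the length of the perfect resolution of $\mA_{\delta,\lambda}(\delta^{jm_1})$, and the numerology you offer for $4n+1$ (``\v Cech length $\le n+1$ plus twice a local cohomological dimension $\le 2n$ plus $1$'') is both unjustified (nothing forces the GIT quotient to be covered by $n+1$ charts, and A5 is not a \v Cech statement) and arithmetically inconsistent with $4n+1$. Knowing that $\mA_{\delta,\lambda}(\delta^{jm_1})$ is perfect only gives \emph{some} finite-length resolution by summands of globally free modules, and the length could a priori grow with $j$ — if it did, the flat dimension of the colimit $\varinjlim_l U_\lambda(\delta^{lm_i})$ would be uncontrolled and the conclusion would fail. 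The paper's actual argument here is a syzygy/$\mathrm{Ext}$-vanishing argument that you do not have: letting $\mK_i$ be the $i$-th syzygy of the perfect complex $\mP^\bullet$, A2 forces $\mathbb R\Gamma(\mK_i)$ to be concentrated in degree $0$, hence $\mathrm{Ext}^{r}_{\mA_{\delta,\lambda}}(\mA_{\delta,\lambda},\mK_i)=0$ for $r\ge1$ and so $\mathrm{Ext}^{r}_{\mA_{\delta,\lambda}}(\mP^s,\mK_i)=0$ for $r\ge1$; dimension shifting then gives $\mathrm{Ext}^{4n+2}_{\mA_{\delta,\lambda}}(\mA_{\delta,\lambda}(\delta^m),\mK_{4n+1})\cong\mathrm{Ext}^{1}_{\mA_{\delta,\lambda}}(\mK_{4n},\mK_{4n+1})$, while the sheaf $\mathbb R\mHom$ has amplitude $\le 2n$ (global dimension of $\mA_{\delta,\lambda}$, since $\dim\mu^{-1}(0)\git_\delta G\le 2n$) and $\mathbb R\Gamma$ has cohomological dimension $\le 2n$, so $\mathrm{Ext}^{4n+2}=0$, forcing the extension $0\to\mK_{4n+1}\to\mP^{-4n-1}\to\mK_{4n}\to0$ to split and the smart truncation $\tau^{\ge-4n}\mP^\bullet$ to work. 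This is where the constant $4n+1=2n+2n+1$ actually comes from, and it is the step your proposal is missing.
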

\begin{proof}\
Let $\langle \mA_{\delta,\lambda} \rangle^{right}$ denote the smallest Karoubian subcategory of $D^b(Mod_{l.f.g}^{right}(\mA_{\delta,\lambda}))$ generated by $\mA_{\delta,\lambda}$ as as right module over itself.\\

\underline{Claim 1:}  $\mA_{\delta,\lambda}(\delta^{jm_1 })$ is an object of $\langle \mA_{\delta,\lambda} \rangle^{right}$ for all $j \in \mathbb{N}$.
 \begin{lineindent}{.2in}We proceed by strong induction.  The base case is obvious so fix $j \geq 0$ and assume that $\mA_{\delta,\lambda}(\delta^{j'm_1}) \in \langle \mA_{\delta,\lambda} \rangle^{right}$ for all $0 \leq j' < j$.  If $j < \frac{\sum_i m_i}{m_1}$ then we are done so we consider the case when $j \geq \frac{\sum_i m_i}{m_1}$.\\

Let $K^{\bullet}(m)$ be the Koszul complex from $\ref{Koszulcomplex}$ and set $m=jm_1 - \sum_i m_i$.  Let $K'$ be the stupid truncation $\tau^{\leq -2} K^{\bullet}(m)$.  We  have that $\mA_{\delta,\lambda}(jm_1)=\mA_{\delta,\lambda}(\delta^{m+\sum_i m_i}) \cong Cone(K' \rightarrow \oplus_j \mA_{\delta,\lambda}(\delta^{m+\sum_{i \neq j}m_i}))$.  By inductive hypothesis, $K'$ and  $\oplus_j \mA_{\delta,\lambda}(\delta^{m+\sum_{i \neq j}m_i})$ are objects of  $\langle \mA_{\delta,\lambda} \rangle^{right}$ and therefore their cone is also.   This completes the proof of claim 1.\\
\end{lineindent}
\underline{Claim 2:}  For all $j \geq 0$ the right module $\mA_{\delta,\lambda}(\delta^{jm_1})$ is quasi-isomorphic to a complex $\mP^{\bullet}$ with $P^{i} =0$ for all $i \notin [-4n-1,0]$ ($n=dimX$) and whose terms are global direct summands of globally free finite rank right $\mA_{\delta,\lambda}$ modules.
\begin{lineindent}{.2in} Fix $j \geq 0$ and set $m=jm_1$.  By claim $1$ the right module $\mA_{\delta,\lambda}(\delta^{m})$ is perfect by virtue of being in the Karoubian subcategory generated by $\mA_{\delta,\lambda}$. This combined with assumption $A2$ implies that $U_{\lambda}(\delta^m) \cong \mathbb{R}\Gamma(\mA_{\delta,\lambda}(\delta^m)) $ is perfect so that $U_{\lambda}(\delta^m) \otimes^{\mathbb{L}}_{U_{\lambda}} \mA_{\delta,\lambda} \cong \mA_{\delta,\lambda}(\delta^m)$ by \ref{perflem}.  Thus $\mA_{\delta,\lambda}(\delta^m)$ is quasi-isomorphic to a complex $\mP^{\bullet}$ where each $\mP^{i}$ is a direct summand of a globally free finite rank right $\mA_{\delta,\lambda}$ module and $\mP^{i}=0$ for all $i \geq 1$.\\

 Let $\mK_{i}$ denote the $-i^{th}$ syzygy module of $\mP^{\bullet}$ placed in degree zero as an object of the derived category of right $\mA_{\delta,\lambda}$-modules.  If $\mP'$ is the stupid truncation $\tau^{\geq -i}\mP^{\bullet}$ then $\mK_{i}[i+1]$ is the cone of the map $\mP' \rightarrow\mA_{\delta,\lambda}(\delta^m)$.  Thus, $\mathbb{R}\Gamma(\mK_{i})[i+1]$ is the cone of $\mathbb{R}\Gamma(\mP') \rightarrow \mathbb{R}\Gamma(\mA_{\delta, \lambda}(\delta^m))$.  By assumption A2 the terms of $\mP'$ are $\Gamma$-acyclic, so the latter cone is the cone of $\Gamma(\mP') \rightarrow U_{\lambda}(\delta^m)$.  This cone is $Ker(\Gamma(\mP^{-i}) \rightarrow \Gamma(\mP^{-i+1}))[i+1]$.  Therefore, $\mathbb{R}\Gamma(\mK_{i})[i+1] \cong Ker(\Gamma(\mP^{-i}) \rightarrow \Gamma(\mP^{-i+1}))[i+1]$ which implies $ 0 = \mathbb{R}^r\Gamma(\mK_{i}) = Ext^r_{\mA_{\delta,\lambda}}(\mA_{\delta,\lambda},\mK_{i})$ for all $r \geq 1$.\\

Thus for any $r \geq 1$, $Ext^r_{\mA_{\delta,\lambda}}(\mA_{\delta,\lambda}, \mK_{i}) = 0$  implies $Ext^r_{\mA_{\delta,\lambda}}(\mP^s, \mK_{i}) = 0$ for all $s$ because $\mP^s$ is a direct summand of a globally free $\mA_{\delta,\lambda}$-module. From this we deduce the following syzygy shifting property for $\mK_i$:
$$Ext^{4n+2}_{\mA_{\delta,\lambda}}(\mA_{\delta,\lambda}(\delta^m),\mK_{i}) \cong Ext^{4n+1}_{\mA_{\delta,\lambda}}(\mK_0,\mK_{i}) \cong Ext^{4n}_{\mA_{\delta,\lambda}}(\mK_1,\mK_i) \cong ... \cong Ext^1_{\mA_{\delta,\lambda}}(\mK_{4n}, \mK_i).$$

The global dimension of $\mu^{-1}(0) \git_\delta G$ is at most $2n$ so the global dimension of $\mA_{\delta, \lambda}$ is at most $2n$. Hence the sheaf complex
$$\mathbb{R}\mHom_{\mA_{\delta,\lambda}}(\mA_{\delta,\lambda}(\delta^m),\mK_{i}) \in Ob(D^{[0,2n]}(Mod_{l.f.g.}(\mA_{\delta,\lambda})))$$ so the global complex $$\mathbb{R}Hom_{\mA_{\lambda,\delta}}(\mA_{\delta,\lambda}(\delta^m), \mK_{i}) =\mathbb{R}\Gamma(\mathbb{R}\mHom_{\mA_{\delta,\lambda}}(\mA_{\delta,\lambda}(\delta^m),\mK_{i})) \in Ob(D^{[0,4n]}(Mod_{l.f.g.}(\mA_{\delta,\lambda}))).$$  Therefore $0= Ext^{4n+2}_{\mA_{\delta,\lambda}}(\mA_{\delta,\lambda}(\delta^m),\mK_{4n+1})=Ext^1_{\mA_{\delta,\lambda}}(\mK_{4n},\mK_{4n+1})  $ yielding that $\mP^{-4n-1} = \mK_{4n} \oplus \mK_{4n+1}$.  
As $\mP^{-4n-1}$ is a direct summand of a globally free right $\mA_{\delta,\lambda}$-modules so is $\mK_{4n}$.  The smart truncation $\tau^{\geq -4n}\mP^{\bullet}$ is a complex which satisfies the conclusion of the claim.
\end{lineindent}
Recall that $\mathbb{R}\Gamma(\mA_{\delta,\lambda}(\delta^{jm_1})) \cong U_{\lambda}(\delta^{jm_1})$ from assumption A3 so by our claim each $U_{\lambda}(\delta^{jm_1})$ is quasi-isomorphic to a perfect complex $P^{\bullet}$ with $P^i = 0$ for all $i \notin [-4n-1,0]$.  Thus the right flat dimension of $U_{\lambda}(\delta^{jm_1})$ is at most $4n+1$.  By assumption A6 we have 
$$\mA_{\delta,\lambda}(U_{f_i \neq 0}) \cong \varinjlim_{l} U_{\lambda}(\delta^{ml})$$
and thus $\mA_{\delta, \lambda}(U_{f_i \neq 0})$ has right flat dimension over $U_{\lambda}$ at most $4n+1$.  The conclusion of  follows.

\end{proof}

\begin{thm}\label{mainthm}(Main Theorem) Let $\langle \mA_{\delta,\lambda} \rangle^{right}$ denote the Karoubian triangulated subcategory of $D^b(Mod^{right}_{l.f.g.}(\mA_{\delta,\lambda}))$ generated by $\mA_{\delta,\lambda}$ as a right module over itself.  The following statements are equivalent.
\begin{enumerate}
\item The global sections functor gives an equivalence of categories
$$D^b(Mod^{left}_{l.f.g.}(\mA_{\delta,\lambda})) \cong D^b(Mod_{f.g.}^{left}(U_{\lambda})).$$
\item The global sections functor gives an equivalence of categories
$$D^b(Mod^{right}_{l.f.g.}(\mA_{\delta,\lambda})) \cong D^b(Mod_{f.g.}^{right}(U_{\lambda})).$$
\item The algebra $U_{\lambda}$ has finite global dimension.
\item The algebra $U_{\lambda}$ has finite flat dimension.
\item The left localization functor $\mA_{\delta,\lambda} \otimes_{U_{\lambda}}^{\mathbb{L}} -$ is bounded.
\item $\langle \mA_{\delta,\lambda} \rangle^{right} = D^b(Mod_{f.g.}^{right}(\mA_{\delta,\lambda})).$
\item $\mA_{\delta,\lambda}(\chi) \in \langle \mA_{\delta,\lambda} \rangle^{right}$ for all $\chi \in X^*(G)$.
\item $\mA_{\delta,\lambda}(\delta^{am_1})  \in \langle \mA_{\delta,\lambda} \rangle^{right}$ for all $1 \leq a < \frac{\sum_i m_i}{m_1}$ where $f_1,...,f_l$ globally generate the ideal defining $(T^*X)^{\delta-uns}$  with each $f_i$ of weight $m_i$  (see \ref{Koszulcomplex})
\item For all $1 \leq a < \frac{\sum_i m_i}{m_1}$, where the $m_i$ are as in $8$, the natural map $$U_{\lambda}(\delta^{am_1}) \otimes^{\mathbb{L}}_{U_{\lambda}} \mA_{\delta,\lambda} \rightarrow \mA_{\delta,\lambda}(\delta^{am_1})$$ is an isomorphism.
\end{enumerate}

\end{thm}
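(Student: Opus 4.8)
The plan is to run the cycle $(3)\Rightarrow(6)\Rightarrow(7)\Rightarrow(8)\Rightarrow(5)\Rightarrow(3)$, to observe that $(1),(2),(4)$ are equivalent to $(3)$, and to obtain $(8)\Leftrightarrow(9)$ from Proposition~\ref{perflem}. The equivalences $(1)\Leftrightarrow(3)\Leftrightarrow(2)$ come from the theorem of \cite{BMR} and \cite{MN} quoted in the introduction (derived localization holds iff $U_\lambda$ has finite global dimension), together with the fact that $U_\lambda$ is Noetherian — its associated graded ring for the order filtration is a finitely generated commutative $\boldk$-algebra — so that finiteness of the left, right and weak global dimensions of $U_\lambda$ are equivalent; the last gives $(3)\Leftrightarrow(4)$.

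For the cycle, $(3)\Rightarrow(6)$ follows because, by $(2)$, $\mathbb{R}\Gamma$ is an equivalence $D^b(Mod^{right}_{l.f.g.}(\mA_{\delta,\lambda}))\xrightarrow{\sim}D^b(Mod^{right}_{f.g.}(U_\lambda))$ sending $\mA_{\delta,\lambda}$ to $U_\lambda$, hence $\langle\mA_{\delta,\lambda}\rangle^{right}$ to the Karoubian triangulated subcategory generated by $U_\lambda$, namely $D^b_{perf}(U_\lambda)$; finite global dimension makes the latter all of $D^b(Mod^{right}_{f.g.}(U_\lambda))$. The implications $(6)\Rightarrow(7)\Rightarrow(8)$ are immediate (the $\mA_{\delta,\lambda}(\chi)$ are coherent right $\mA_{\delta,\lambda}$-modules), and $(8)\Rightarrow(5)$ is exactly Lemma~\ref{mainlem}. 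The crux is $(5)\Rightarrow(3)$: if the left localization functor carries $D^{[a,b]}$ into $D^{[a-c,b]}$, then, using the adjunction $(\mA_{\delta,\lambda}\otimes^{\mathbb L}_{U_\lambda}-,\ \mathbb{R}\Gamma)$ and the identity $\mathbb{R}\Gamma\circ(\mA_{\delta,\lambda}\otimes^{\mathbb L}_{U_\lambda}-)\cong\mathrm{id}$ — valid by A2, since $\mathbb{R}\Gamma(\mA_{\delta,\lambda})=U_\lambda$ and $\mathbb{R}\Gamma$ has finite cohomological dimension — one gets, for all finitely generated $U_\lambda$-modules $M,N$,
$$\mathbb{R}Hom_{U_\lambda}(M,N)\;\cong\;\mathbb{R}Hom_{\mA_{\delta,\lambda}}\bigl(\mA_{\delta,\lambda}\otimes^{\mathbb L}_{U_\lambda}M,\ \mA_{\delta,\lambda}\otimes^{\mathbb L}_{U_\lambda}N\bigr).$$
Both arguments on the right lie in $D^{[-c,0]}$, $\dim(\mu^{-1}(0)\git_\delta G)\le 2n$, and $\mA_{\delta,\lambda}$ has global dimension $\le 2n$ as a sheaf of algebras, so the right side is concentrated in cohomological degrees $\le c+4n$; hence $Ext^k_{U_\lambda}(M,N)=0$ for $k>c+4n$ and all finitely generated $M,N$, so $U_\lambda$ has finite global dimension.

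For $(8)\Leftrightarrow(9)$: after using A2 to identify $\mathbb{R}\Gamma(\mA_{\delta,\lambda}(\delta^{am_1}))=U_\lambda(\delta^{am_1})$ (legitimate since $am_1\ge 0$), the natural map in $(9)$ is the counit of the localization adjunction at $\mA_{\delta,\lambda}(\delta^{am_1})$. By Proposition~\ref{perflem} the localization functor and $\mathbb{R}\Gamma$ are mutually inverse equivalences between $D^b_{gl.free}(\mA_{\delta,\lambda})$ and $D^b_{perf}(U_\lambda)$; since $D^b_{perf}(U_\lambda)$ is idempotent complete, so is $D^b_{gl.free}(\mA_{\delta,\lambda})$, and being a triangulated subcategory of $D^b(Mod^{right}_{l.f.g.}(\mA_{\delta,\lambda}))$ that contains $\mA_{\delta,\lambda}$ and is closed under summands it equals $\langle\mA_{\delta,\lambda}\rangle^{right}$. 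Hence the counit at $\mA_{\delta,\lambda}(\delta^{am_1})$ is an isomorphism iff $\mA_{\delta,\lambda}(\delta^{am_1})\in\langle\mA_{\delta,\lambda}\rangle^{right}$, giving $(8)\Rightarrow(9)$; for $(9)\Rightarrow(8)$ one propagates $(9)$ to all twists $\mA_{\delta,\lambda}(\delta^{jm_1})$, $j\ge 0$, by running the Koszul induction of Lemma~\ref{mainlem} inside the (triangulated) subcategory on which the counit is an isomorphism, and then concludes via A5 as in the last paragraph of the proof of Lemma~\ref{mainlem}.

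The step I expect to be the real obstacle is $(5)\Rightarrow(3)$: turning boundedness of the localization functor into finiteness of global dimension forces one to produce the uniform cohomological amplitude bound used above, which relies on finiteness of $\dim(\mu^{-1}(0)\git_\delta G)$, of the global dimension of $\mA_{\delta,\lambda}$, and of the cohomological dimension of $\mathbb{R}\Gamma$, together with the fact that $\mathbb{R}\Gamma$ is a one-sided inverse to localization. A subsidiary point needing care is the passage between left and right modules and the idempotent-completeness argument identifying $D^b_{gl.free}(\mA_{\delta,\lambda})$ with $\langle\mA_{\delta,\lambda}\rangle^{right}$; beyond these, everything is formal once Lemma~\ref{mainlem} is available.
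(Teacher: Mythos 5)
Your plan — close the cycle $(6)\Rightarrow(7)\Rightarrow(8)\Rightarrow(5)$ via Lemma~\ref{mainlem}, tie $(1),(2),(3),(4)$ together, and attach $(9)$ — matches the paper's organization, but you deviate at two points. First, the paper simply cites $(1)\Leftrightarrow(5)$ from \cite{MN}/\cite{BMR}, whereas you prove $(5)\Rightarrow(3)$ directly via the adjunction isomorphism $\mathbb{R}Hom_{U_\lambda}(M,N)\cong\mathbb{R}Hom_{\mA_{\delta,\lambda}}(\mA_{\delta,\lambda}\otimes^{\mathbb L}_{U_\lambda}M,\,\mA_{\delta,\lambda}\otimes^{\mathbb L}_{U_\lambda}N)$ and an amplitude count. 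This is more self-contained, but it tacitly uses $\mathbb{R}\Gamma\circ(\mA_{\delta,\lambda}\otimes^{\mathbb L}_{U_\lambda}-)\cong\mathrm{id}$ on all of $D^b(Mod_{f.g.}(U_\lambda))$, which is more than A2 literally says and is itself one of the nontrivial ingredients of the cited equivalence: one needs a way-out argument combining the finite cohomological dimension of $\mathbb{R}\Gamma$ with the $\Gamma$-acyclicity of globally free $\mA_{\delta,\lambda}$-modules, applied to a possibly unbounded free resolution. Second, for $(9)$ the paper goes $(2)\Rightarrow(9)$ (the cone of the natural map has vanishing $\mathbb{R}\Gamma$, hence is zero) and $(9)\Rightarrow(8)$ (resolve $U_\lambda(\delta^{am_1})$ by finite free modules, localize, then truncate using the finite homological dimension of $Mod_{l.f.g.}(\mA_{\delta,\lambda})$ and \cite[3.4.5]{BMR}); your $(8)\Rightarrow(9)$ via the counit argument is fine, and your ``$(9)\Rightarrow(8)$'' as sketched really yields $(9)\Rightarrow(5)$ by re-running Lemma~\ref{mainlem} with ``counit is an isomorphism'' in place of ``lies in $\langle\mA_{\delta,\lambda}\rangle^{right}$'', which still closes the diagram. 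One genuine slip you should repair: $D^b_{gl.free}(Mod_{l.f.g.}(\mA_{\delta,\lambda}))$ is \emph{not} closed under direct summands inside $D^b(Mod^{right}_{l.f.g.}(\mA_{\delta,\lambda}))$ (a summand of a bounded complex of globally free modules need not again be representable by such a complex), so it need not literally equal $\langle\mA_{\delta,\lambda}\rangle^{right}$. What you actually need, and what is true, is only the weaker statement that the full subcategory of objects on which the counit of the adjunction is an isomorphism is a thick triangulated subcategory containing $\mA_{\delta,\lambda}$ by A2 and \ref{perflem}, hence contains $\langle\mA_{\delta,\lambda}\rangle^{right}$.
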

\begin{proof}\

$(1 \Leftrightarrow 5)$ , $(2 \Leftrightarrow 3)$, and $(1 \Leftrightarrow 3)$ follow from the work of \cite{MN} in characteristic $0$ and \cite{BMR} in characteristic $p >0$.\\
$(2 \Rightarrow 9)$ Assuming global sections gives an equivalence of categories and assumption $A2$, we have that if $C$ is cone of the natural map $U_{\lambda}(\delta^{am_1}) \otimes_{U_{\lambda}}^{\mathbb{L}} \mA_{\delta,\lambda} \rightarrow \mA_{\delta,\lambda}(\delta^{am_1})$ then $\mathbb{R}\Gamma(C) \cong 0$ and so $C \cong 0$. \\
$(3 \Rightarrow 4)$ is clear.\\
$(4 \Rightarrow 5)$ is clear.\\
$(5 \Rightarrow 6)$ follows from $(5 \Rightarrow 1 \Rightarrow 3)$.  Global sections exchanges $\mA_{\delta,\lambda}$ and $U_{\lambda}$ by assumptions made on $\mA_{\delta,\lambda}$.  The finiteness of the global dimension of $U_{\lambda}$  implies that $U_{\lambda}$ Karoubian generates $D^b(Mod^{right}_{f.g.}(U_{\lambda}))$. As localization preserves Karoubian generation, $\mA_{\delta,\lambda}$ generates the derived category as proposed.\\
$(6 \Rightarrow 7)$ is clear.\\
$(7 \Rightarrow 8)$ is clear.\\
$(8 \Rightarrow 5)$ This is \ref{mainlem}.\\
$(9 \Rightarrow 8)$ We can compute the tensor product by taking a (possibly unbounded) resolution of $U_{\lambda}(\delta^m)$ by finite rank free right $U_{\lambda}$ modules, thus $\mA_{\delta,\lambda}(\delta^m)$ has an (possibly unbounded) resolution by globally free finite rank right $\mA_{\delta,\lambda}$ modules, $\mF^{\bullet}$.  The category $Mod_{l.f.g.}(\mA_{\delta,\lambda})$ has finite homological dimension so by \cite[3.4.5, version 1]{BMR} $\mA_{\delta,\lambda}(\delta^m)$ is a direct summand of a stupid truncation of $\mF^{\bullet}$.  Thus, $\mA_{\delta,\lambda}(\delta^m) \in \langle \mA_{\delta,\lambda} \rangle^{right}$.

\end{proof}

\begin{rem}Condition $9$ can be considered analogous to the Borel-Weil-Bott theorem for line bundles on the flag variety.
\end{rem}

\section{Examples}
\subsection{Transfer bimodules and Morita contexts}
We will now briefly revisit some standard results and techniques in noncommutative algebra and Morita theory.  A good resource for the statements in the affine cases is \cite{McR}.\\

\begin{df}(Transfer bimodules) For $\Lambda_1, \Lambda_2 \in X^*(G)$ and $\lambda_i = d\Lambda_i$ define the transfer bimodules
$$\mB^{\Lambda_1, \Lambda_2}_{\delta}(U_{s \neq 0})= \left(\mD_X/ \{\mu^*(\Theta)-\lambda_1(\Theta)\}_{\Theta \in \mathfrak{g}}\mD_X(U_{s \neq 0})\right)^{\Lambda_1 \Lambda_2^{-1}}$$
and
$$B^{\Lambda_1, \Lambda_2} =  \left(\mD_X(X)/ \{\mu^*(\Theta)-\lambda_1(\Theta)\}_{\Theta \in \mathfrak{g}}\mD_X(X)\right)^{\Lambda_1 \Lambda_2^{-1}}.$$
\end{df}

$\mB_{\delta}^{\Lambda_1,\Lambda_2}$ is an ($\mA_{ \delta,\lambda_1},\mA_{\delta, \lambda_2 }$) bimodule and  $B^{\Lambda_1,\Lambda_2}$ is a $(U_{\lambda_1}, U_{\lambda_2})$ bimodule.\\

Notice that if $\lambda = d\Lambda$ for $\Lambda \in X^*(G)$ then
$$\mA_{\delta,\lambda}(\chi) = \mB_{\delta}^{\Lambda+\chi,\Lambda} \text{ and } U_{\lambda}(\chi) = B^{\Lambda+\chi, \Lambda}.$$

There are natural multiplication maps 
$$m_1: B^{\Lambda_1,\Lambda_2} \otimes_{U_{\lambda_2}} B^{\Lambda_2, \Lambda_1} \rightarrow U_{\lambda_1} \text{ and } m_2: B^{\Lambda_2,\Lambda_1} \otimes_{U_{\lambda_1}} B^{\Lambda_1, \Lambda_2} \rightarrow U_{\lambda_2}$$

which make the sextuplet $(U_{\lambda_1},U_{\lambda_2}, B^{\Lambda_1,\Lambda_2}, B^{\Lambda_2,\Lambda_1},  m_1, m_2)$  a Morita context. \\

We can use these Morita contexts to induce a relation preordering $X^*(G)$.
\begin{df}(The surjection relation)\footnote{This relation is much harder to satisfy when the characteristic of $\boldk$ is $p > 0$.  For example, consider the diagonal action of $\mathbb{G}_m$ on $\mathbb{A}^2$.  In characteristic $0$ this equivalence relation determines at most three equivalence classes but determines an infinite number of equivalence classes in characteristic $p >0$.}\

 For $\Lambda_1, \Lambda_2 \in X^*(G)$  and $\lambda_i = d\Lambda_i$ define $\Lambda_2 \geq \Lambda_1$ if and only if $B^{\Lambda_1, \Lambda_2} \otimes_{U_{\lambda_2}} B^{\Lambda_2, \Lambda_1} \rightarrow U_{\lambda_1}$ is surjective.
\end{df}

\begin{pro} $\geq$ is a preorder on $X^*(G)$.
\end{pro}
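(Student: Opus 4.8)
The plan is to verify the two defining properties of a preorder: reflexivity and transitivity. For reflexivity, I would take $\Lambda_1 = \Lambda_2 = \Lambda$, so that $\lambda_1 = \lambda_2 = \lambda$ and $B^{\Lambda,\Lambda} = U_\lambda$ as a $(U_\lambda, U_\lambda)$-bimodule. The multiplication map $B^{\Lambda,\Lambda}\otimes_{U_\lambda} B^{\Lambda,\Lambda} \to U_\lambda$ is then just the multiplication map $U_\lambda \otimes_{U_\lambda} U_\lambda \to U_\lambda$, which is an isomorphism, hence surjective. So $\Lambda \geq \Lambda$.

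For transitivity, suppose $\Lambda_3 \geq \Lambda_2$ and $\Lambda_2 \geq \Lambda_1$; I must show $\Lambda_3 \geq \Lambda_1$, i.e.\ that $m_1 : B^{\Lambda_1,\Lambda_3}\otimes_{U_{\lambda_3}} B^{\Lambda_3,\Lambda_1} \to U_{\lambda_1}$ is surjective. The key structural input I would use is that the transfer bimodules compose: there are natural multiplication maps $B^{\Lambda_1,\Lambda_2}\otimes_{U_{\lambda_2}} B^{\Lambda_2,\Lambda_3}\to B^{\Lambda_1,\Lambda_3}$ induced by multiplication in $\mD_X(X)$ (on the level of $\chi$-semi-invariants, the product of a $\Lambda_1\Lambda_2^{-1}$-semi-invariant with a $\Lambda_2\Lambda_3^{-1}$-semi-invariant is a $\Lambda_1\Lambda_3^{-1}$-semi-invariant), and these are associative in the evident sense. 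Using this I would factor $m_1$ for the pair $(\Lambda_1,\Lambda_3)$ through the composites: the image of $B^{\Lambda_1,\Lambda_2}\otimes B^{\Lambda_2,\Lambda_3}\otimes B^{\Lambda_3,\Lambda_2}\otimes B^{\Lambda_2,\Lambda_1}$, after applying the $(\Lambda_3\geq\Lambda_2)$-surjection in the middle to land a copy of $U_{\lambda_2}$, becomes the image of $B^{\Lambda_1,\Lambda_2}\otimes_{U_{\lambda_2}} U_{\lambda_2}\otimes_{U_{\lambda_2}} B^{\Lambda_2,\Lambda_1} = B^{\Lambda_1,\Lambda_2}\otimes_{U_{\lambda_2}} B^{\Lambda_2,\Lambda_1}$ inside $U_{\lambda_1}$, which by $(\Lambda_2\geq\Lambda_1)$ is all of $U_{\lambda_1}$. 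Chasing the associativity diagram, this image is contained in the image of $m_1 : B^{\Lambda_1,\Lambda_3}\otimes_{U_{\lambda_3}} B^{\Lambda_3,\Lambda_1}\to U_{\lambda_1}$, giving surjectivity.

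The main obstacle I anticipate is bookkeeping the associativity of the composition maps together with the fact that the middle surjection $B^{\Lambda_2,\Lambda_3}\otimes_{U_{\lambda_3}} B^{\Lambda_3,\Lambda_2}\to U_{\lambda_2}$ is a map of $(U_{\lambda_2},U_{\lambda_2})$-bimodules, so that tensoring it on the left by $B^{\Lambda_1,\Lambda_2}$ and on the right by $B^{\Lambda_2,\Lambda_1}$ is legitimate and compatible with the outer multiplication into $U_{\lambda_1}$. Concretely one needs the commutative square saying that $\big(B^{\Lambda_1,\Lambda_2}\otimes B^{\Lambda_2,\Lambda_3}\big)\otimes\big(B^{\Lambda_3,\Lambda_2}\otimes B^{\Lambda_2,\Lambda_1}\big)\to B^{\Lambda_1,\Lambda_3}\otimes B^{\Lambda_3,\Lambda_1}\to U_{\lambda_1}$ agrees with first contracting the inner pair to $U_{\lambda_2}$ and then applying $m_1$ for $(\Lambda_1,\Lambda_2)$; this is exactly the associativity of multiplication in $\mD_X(X)$ passed through quotients and semi-invariants, so it is routine but must be stated carefully, particularly because $G$ need not be linearly reductive and taking $\chi$-semi-invariants need not be exact — however, surjectivity of a composite of maps only requires surjectivity of each factor and commutativity of the diagram, not exactness, so this causes no real trouble. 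I would also note that transitivity does not require the surjections to be isomorphisms, so no hypothesis beyond the definition is needed.
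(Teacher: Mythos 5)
Your proof is correct. Note that the paper actually states this proposition without proof (it is the standard Morita-context argument; the very next proposition cites \cite{McR} for such facts), so there is no authorial argument to compare against. Your structure is exactly the standard one: reflexivity from $B^{\Lambda,\Lambda}\cong U_{\lambda}$, and transitivity by inserting the surjection $B^{\Lambda_2,\Lambda_3}\otimes_{U_{\lambda_3}} B^{\Lambda_3,\Lambda_2}\twoheadrightarrow U_{\lambda_2}$ into the four-fold tensor product, using right-exactness of tensor to keep surjectivity, and then invoking associativity of the composition maps $B^{\Lambda_i,\Lambda_j}\otimes_{U_{\lambda_j}} B^{\Lambda_j,\Lambda_k}\to B^{\Lambda_i,\Lambda_k}$ (all induced from multiplication in $\mD_X(X)$ passed through the quotient and semi-invariants) to see that the resulting surjection onto $U_{\lambda_1}$ factors through $m_1\colon B^{\Lambda_1,\Lambda_3}\otimes_{U_{\lambda_3}} B^{\Lambda_3,\Lambda_1}\to U_{\lambda_1}$. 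Your remark that no exactness of $(-)^{\chi}$ is needed — only commutativity of the diagram plus surjectivity of each factor — is the right thing to flag, since the paper explicitly warns that taking semi-invariants need not be exact when $G$ is not linearly reductive.
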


\begin{pro} If $\Lambda_2 \geq \Lambda_1$ then $B^{\Lambda_1,\Lambda_2}$ is projective as a right $U_{\lambda_2}$-module and the functor $ B^{\Lambda_1,\Lambda_2} \otimes_{U_{\lambda_2}} -$  is faithful.  If $\Lambda_1 \geq \Lambda_2$ and $\Lambda_2 \geq \Lambda_1$ then $U_{\lambda_1}$ and $U_{\lambda_2}$ are Morita equivalent.
\end{pro}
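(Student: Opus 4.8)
The whole statement is an instance of the elementary theory of Morita contexts applied to the sextuple $(U_{\lambda_1}, U_{\lambda_2}, B^{\Lambda_1,\Lambda_2}, B^{\Lambda_2,\Lambda_1}, m_1, m_2)$, in which $\Lambda_2 \geq \Lambda_1$ is by definition the statement that $m_1$ is surjective and $\Lambda_1 \geq \Lambda_2$ the statement that $m_2$ is surjective. So the plan is to run the standard ``one pairing surjective'' argument for the first sentence and the ``both pairings surjective'' argument for the second, citing \cite{McR} for the routine parts. First, the dual-basis step: since $m_1$ is onto, choose finitely many $p_i \in B^{\Lambda_1,\Lambda_2}$, $q_i \in B^{\Lambda_2,\Lambda_1}$ with $\sum_i m_1(p_i \otimes q_i) = 1 \in U_{\lambda_1}$. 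Plugging this into the two associativity axioms of the Morita context, $m_1(p\otimes q)\, p' = p\, m_2(q\otimes p')$ and $q'\, m_1(p\otimes q) = m_2(q'\otimes p)\, q$, one gets for every $p \in B^{\Lambda_1,\Lambda_2}$ the identity $p = \sum_i p_i\, m_2(q_i\otimes p)$, and each $p \mapsto m_2(q_i\otimes p)$ is right $U_{\lambda_2}$-linear. This is a finite dual basis, so $B^{\Lambda_1,\Lambda_2}$ is finitely generated projective as a right $U_{\lambda_2}$-module; the same computation shows $q \mapsto m_2(q\otimes -)$ identifies $B^{\Lambda_2,\Lambda_1}$ with $\mathrm{Hom}_{U_{\lambda_2}}(B^{\Lambda_1,\Lambda_2}, U_{\lambda_2})$ and that $m_1$ is moreover injective, hence an isomorphism of $U_{\lambda_1}$-bimodules.

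For faithfulness of $F := B^{\Lambda_1,\Lambda_2}\otimes_{U_{\lambda_2}} -$, note that $m_1$ being an isomorphism yields a natural equivalence $F\circ(B^{\Lambda_2,\Lambda_1}\otimes_{U_{\lambda_1}} -) \simeq \mathrm{id}$ on left $U_{\lambda_1}$-modules, exhibiting $B^{\Lambda_2,\Lambda_1}\otimes_{U_{\lambda_1}}-$ as a section of $F$. This already makes that section functor faithful, and it reduces faithfulness of $F$ itself to showing the companion pairing is nondegenerate enough -- concretely, that the trace ideal $\mathrm{im}(m_2)\subseteq U_{\lambda_2}$ (a two-sided ideal, equal to $\mathrm{tr}(B^{\Lambda_1,\Lambda_2})$ by the first step) is all of $U_{\lambda_2}$, i.e. that $B^{\Lambda_1,\Lambda_2}$ is not merely projective but a progenerator on the right. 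In the presence of the assumptions of \ref{assumptions}, the way I would get this is via the sheafy transfer bimodule $\mB^{\Lambda_1,\Lambda_2}_{\delta}$, which is an invertible $\mA_{\delta,\lambda_2}$-bimodule (a quantized line bundle, using freeness of the action in A4), so that $\mB^{\Lambda_2,\Lambda_1}_{\delta}\otimes_{\mA_{\delta,\lambda_2}}\mB^{\Lambda_1,\Lambda_2}_{\delta}\to\mA_{\delta,\lambda_2}$ is already an isomorphism of sheaves; one then pushes this down to global sections using A2/A5. I expect this to be the one genuinely delicate point of the proof: the literal statement ``$m_1$ surjective $\Rightarrow F$ faithful'' is \emph{not} formal for an abstract Morita context (it fails on $U_{\lambda_2}/\mathrm{im}(m_2)$-modules when $m_2$ is not onto), so one must feed in something about these particular transfer bimodules, and the cleanest such input is the automatic sheaf-level invertibility together with the global-section comparisons the paper has already set up.

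Finally, the two-sided case is the clean classical one: if $\Lambda_1 \geq \Lambda_2$ and $\Lambda_2 \geq \Lambda_1$ then both $m_1$ and $m_2$ are surjective, so by the dual-basis step each of $B^{\Lambda_1,\Lambda_2}$ and $B^{\Lambda_2,\Lambda_1}$ is a finitely generated projective generator on the appropriate side and both $m_1$, $m_2$ are isomorphisms; Morita's theorem then says $B^{\Lambda_1,\Lambda_2}\otimes_{U_{\lambda_2}}-$ and $B^{\Lambda_2,\Lambda_1}\otimes_{U_{\lambda_1}}-$ are mutually quasi-inverse equivalences between $U_{\lambda_2}$-modules and $U_{\lambda_1}$-modules, so $U_{\lambda_1}$ and $U_{\lambda_2}$ are Morita equivalent. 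In summary: the dual-basis computation and the two-sided Morita conclusion are standard and short; the real content to pin down is the faithfulness assertion in the one-sided case, which should be stated as resting on the invertibility of the sheafy transfer bimodule rather than on abstract Morita nonsense.
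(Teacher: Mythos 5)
The paper's own ``proof'' is the one-line citation to \cite{McR}, so the real question is whether the proposition is indeed standard Morita-context fare. Your dual-basis argument for projectivity (and the identification $B^{\Lambda_2,\Lambda_1}\cong\mathrm{Hom}_{U_{\lambda_2}}(B^{\Lambda_1,\Lambda_2},U_{\lambda_2})$, the bijectivity of $m_1$) and your treatment of the two-sided Morita-equivalence case are exactly the standard arguments the citation is pointing at; that part is fine and matches the paper in spirit.

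You have, however, put your finger on a genuine issue with the faithfulness clause as literally stated, and I think your diagnosis is essentially right but your proposed repair is not. For a f.g.\ projective right $U_{\lambda_2}$-module $P$, the functor $P\otimes_{U_{\lambda_2}}-$ is faithful iff the trace ideal $\mathrm{tr}(P)\subseteq U_{\lambda_2}$ is the unit ideal; here $\mathrm{tr}(B^{\Lambda_1,\Lambda_2})=\mathrm{im}(m_2)$, so faithfulness of $B^{\Lambda_1,\Lambda_2}\otimes_{U_{\lambda_2}}-$ is equivalent to $\Lambda_1\geq\Lambda_2$, \emph{not} a formal consequence of $\Lambda_2\geq\Lambda_1$. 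What \emph{is} formal from $m_1$ surjective is that the \emph{other} functor $B^{\Lambda_2,\Lambda_1}\otimes_{U_{\lambda_1}}-$ is faithful, since $F\circ G\cong\mathrm{id}$ forces $G$ faithful. So the proposition as printed most plausibly has the functor on the wrong side, and the correct standard statement is the one about $B^{\Lambda_2,\Lambda_1}\otimes_{U_{\lambda_1}}-$. Your suggested patch --- deducing $\mathrm{im}(m_2)=U_{\lambda_2}$ from invertibility of the sheafy transfer bimodule plus A2/A5 --- cannot be right: it would show $m_2$ is surjective for \emph{all} pairs $(\Lambda_1,\Lambda_2)$, i.e.\ that the preorder is symmetric, which directly contradicts the paper's own footnote observing that in characteristic $p$ this relation has infinitely many equivalence classes. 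The gap is in passing from the sheaf-level isomorphism to a statement about the multiplication map on global sections; there is only a natural map $\Gamma(\mB)\otimes\Gamma(\mB')\to\Gamma(\mB\otimes\mB')$, and its failure to be surjective is precisely where the preorder becomes nontrivial.
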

\begin{proof}
This is a standard result in noncommutative ring theory, see \cite{McR} for further details.
\end{proof}

\begin{pro}\label{Moritapro} If $\Lambda_1 \geq \Lambda_2 \geq \Lambda_3$ and $\Lambda_3 \geq \Lambda_1$ then $B^{\Lambda_1,\Lambda_2} \otimes_{U_{\lambda_3}} B^{\Lambda_2, \Lambda_3} \cong B^{\Lambda_1, \Lambda_3}$.
\end{pro}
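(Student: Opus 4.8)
The asserted isomorphism should be the multiplication map inherited from $\mD_X(X)$, and the plan is to exhibit an explicit two‑sided inverse for it, using only that one of the relevant Morita multiplication maps is surjective (which the hypotheses supply). First I would record the multiplication maps in the form I need them. Representing a class in $B^{\Lambda_i,\Lambda_j}$ by an operator $P\in\mD_X(X)$ of weight $\Lambda_i\Lambda_j^{-1}$ modulo the left ideal $\{\mu^*(\Theta)-\lambda_i(\Theta)\}_{\Theta}\mD_X(X)$, the commutation identity $P\big(\mu^*(\Theta)-\lambda_j(\Theta)\big)=\big(\mu^*(\Theta)-\lambda_i(\Theta)\big)P$ — valid for $P$ of that weight because $[\mu^*(\Theta),P]=(\lambda_i-\lambda_j)(\Theta)P$ — shows that $[P]\otimes[Q]\mapsto[PQ]$ descends to a well‑defined bimodule map
$$m_{ijk}\colon B^{\Lambda_i,\Lambda_j}\otimes_{U_{\lambda_j}}B^{\Lambda_j,\Lambda_k}\longrightarrow B^{\Lambda_i,\Lambda_k}$$
for every triple of characters, that these maps are mutually associative since multiplication in $\mD_X(X)$ is, and that $m_{iii}$ is just the ring multiplication on $U_{\lambda_i}=B^{\Lambda_i,\Lambda_i}$. (I read the tensor product in the statement as this natural balanced product over $U_{\lambda_2}$.) The map to be shown an isomorphism is $m:=m_{123}$.

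Next I would extract the surjectivity I need. The three hypotheses together with transitivity of $\geq$ give $\Lambda_i\geq\Lambda_j$ for all $i,j\in\{1,2,3\}$; in particular $\Lambda_2\geq\Lambda_1$, so by definition of the surjection relation the map $m_{121}\colon B^{\Lambda_1,\Lambda_2}\otimes_{U_{\lambda_2}}B^{\Lambda_2,\Lambda_1}\to U_{\lambda_1}$ is surjective. I would then fix once and for all a finite expression $\sum_k a_k\otimes b_k$ in $B^{\Lambda_1,\Lambda_2}\otimes_{U_{\lambda_2}}B^{\Lambda_2,\Lambda_1}$ with $\sum_k a_kb_k=1_{U_{\lambda_1}}$, and define an additive map
$$\nu\colon B^{\Lambda_1,\Lambda_3}\longrightarrow B^{\Lambda_1,\Lambda_2}\otimes_{U_{\lambda_2}}B^{\Lambda_2,\Lambda_3},\qquad \nu(y)=\sum_k a_k\otimes m_{213}(b_k\otimes y).$$

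Then I would check that $\nu$ is a two‑sided inverse of $m$, which is pure associativity: $m(\nu(y))=\sum_k a_k\,(b_ky)=\big(\sum_k a_kb_k\big)y=y$, and for $p\in B^{\Lambda_1,\Lambda_2}$ and $q\in B^{\Lambda_2,\Lambda_3}$,
$$\nu(m(p\otimes q))=\sum_k a_k\otimes b_k(pq)=\sum_k a_k\otimes (b_kp)q=\sum_k a_k(b_kp)\otimes q=\big(\textstyle\sum_k a_kb_k\big)p\otimes q=p\otimes q,$$
where $b_kp=m_{212}(b_k\otimes p)\in U_{\lambda_2}$ is moved across $\otimes_{U_{\lambda_2}}$. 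Since $m$ is a $(U_{\lambda_1},U_{\lambda_3})$‑bimodule map and is bijective, it is an isomorphism of bimodules, which is the claim.

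The step that demands genuine care is the first one: verifying that every $m_{ijk}$ is well defined on the weight subspaces of the quotients (independence of representatives in the left ideals, and balancing over $U_{\lambda_j}$) and that these maps are mutually associative; once this bookkeeping is in place the rest is formal. One small observation worth flagging is that only the single inequality $\Lambda_2\geq\Lambda_1$ is actually used, so the conclusion holds under that weaker hypothesis; the stated cyclic hypotheses are simply the natural situation in which all the $U_{\lambda_i}$ become Morita equivalent, and the proposition then records that these Morita equivalences compose as one expects.
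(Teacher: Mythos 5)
Your argument is correct, and it takes a genuinely different route from the paper's. The paper's proof uses the full chain $\Lambda_1 \geq \Lambda_2 \geq \Lambda_3$ to produce a surjection from the four-fold tensor product $B^{\Lambda_3,\Lambda_2}\otimes B^{\Lambda_2,\Lambda_1}\otimes B^{\Lambda_1,\Lambda_2}\otimes B^{\Lambda_2,\Lambda_3}$ onto $U_{\lambda_3}$, shows this factors through $B^{\Lambda_3,\Lambda_1}\otimes_{U_{\lambda_1}}\bigl(B^{\Lambda_1,\Lambda_2}\otimes_{U_{\lambda_2}}B^{\Lambda_2,\Lambda_3}\bigr)$, and then appeals to \cite[5.4]{McR} to identify $B^{\Lambda_1,\Lambda_2}\otimes_{U_{\lambda_2}}B^{\Lambda_2,\Lambda_3}$ with the $U_{\lambda_1}$-dual of $B^{\Lambda_3,\Lambda_1}$, finally using $\Lambda_3 \geq \Lambda_1$ to recognize that dual as $B^{\Lambda_1,\Lambda_3}$. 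You instead write down an explicit two-sided inverse to the multiplication map from a fixed element $\sum_k a_k \otimes b_k$ witnessing surjectivity of $m_{121}$, and associativity does the rest. What your route buys: it is self-contained (no citation to Morita-context duality), and it exposes that the hypothesis really needed is just $\Lambda_2 \geq \Lambda_1$ — the other inequalities only serve, via transitivity, to supply that one. What the paper's route buys: it is a one-line appeal to a standard package once the reader has \cite{McR} at hand, and it leaves the various inverse equivalences as black boxes rather than chasing elements. You also correctly note that the $\otimes_{U_{\lambda_3}}$ in the statement is a typo for $\otimes_{U_{\lambda_2}}$; the paper's own proof tacitly uses $\otimes_{U_{\lambda_2}}$. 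One point to keep in mind, which the paper itself glosses over: representing classes in $B^{\Lambda_i,\Lambda_j}$ by weight-$\Lambda_i\Lambda_j^{-1}$ operators modulo the left ideal requires that taking semi-invariants commute with the quotient (linear reductivity or the freeness condition A4 under which the paper works), so your bookkeeping of the $m_{ijk}$'s implicitly relies on the same standing assumptions the paper does.
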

\begin{proof} As $\Lambda_1 \geq \Lambda_2 \geq \Lambda_3$ multiplication determines a surjective map
$$B^{\Lambda_3,\Lambda_2} \otimes_{U_{\lambda_2}} B^{\Lambda_2, \Lambda_1} \otimes_{U_{\lambda_1}} B^{\Lambda_1, \Lambda_2} \otimes_{U_{\lambda_2}} B^{\Lambda_2, \Lambda_3} \rightarrow U_{\lambda_3}$$
which factors through $B^{\Lambda_3,\Lambda_1} \otimes_{U_{\lambda_1}} B^{\Lambda_1,\Lambda_2} \otimes_{U_{\lambda_2}} B^{\Lambda_2,\Lambda_3}$ via $a \otimes b \otimes c \otimes d \mapsto ab \otimes c \otimes d$.  By \cite[5.4]{McR} $B^{\Lambda_1,\Lambda_2} \otimes_{U_{\lambda_2}} B^{\Lambda_2,\Lambda_3}$ is bimodule isomorphic to the $U_{\lambda_1}$ dual of $B^{\Lambda_3,\Lambda_1}$.  $\Lambda_3 \geq \Lambda_1$ so this dual is precisely $B^{\Lambda_1,\Lambda_3}$.
\end{proof}

\subsection{Example: Localization from a finite number of Morita equivalences}
In characteristic $0$, it is known that having an infinite number of Morita equivalences can be used to prove (underived) localization theorems.  For a  discussion about this technique see \cite{BPW}.  In this section, we will show that a finite number of Morita equivalences is sufficient to imply derived localization holds.

\begin{exa}\label{enoughmorita}(Derived Localization from a finite number of Morita equivalences)\\

Suppose  that $\Lambda\delta^{am_1} \leq \Lambda \leq \Lambda \delta^{am_1}$ for all $1 \leq a \leq \frac{\sum_i m_i}{m_1}$ (in the notation of \ref{Koszulcomplex}) then derived global sections gives an equivalence of categories
$$D^b(Mod_{l.f.g.}(\mA_{\delta,\lambda})) \cong D^b(Mod_{f.g.}(U_{\lambda})).$$
\begin{proof}
Note that $\mA_{\delta,\lambda} = \mA_{\delta^{m_1},\lambda}$ and the same follows for the transfer bi-modules, so without loss of generality we may assume $m_1=1$.
We will prove the following four maps are isomorphisms.  The equivalence of categories follows from realizing I4 is equivalent to condition $9$ of \ref{mainthm}.
\begin{enumerate}
\item[I1.] $U_{\lambda}(\delta^a) \otimes_{U_{\lambda}} \mB_{\delta}^{\Lambda,\Lambda\delta^a} \rightarrow \mA_{\delta, \lambda+ad\delta}$ is an isomorphism for all $1 \leq a \leq \sum_i m_i$.
\item[I2.] $\mA_{\delta,\lambda}(\delta^a) \otimes_{\mA_{\delta,\lambda}} \mB_{\delta}^{\Lambda,\Lambda\delta^a} \rightarrow \mA_{\delta, \lambda+ad\delta}$ is an isomorphism for all $1 \leq a \leq \sum_i m_i$.
\item[I3.] $\mB_{\delta}^{\Lambda,\Lambda\delta^a} \otimes_{\mA_{\delta,\lambda+ad\delta}} \mA_{\delta,\lambda}(\delta^a) \rightarrow \mA_{\delta,\lambda}$ is an isomorphism for all $1 \leq a \leq \sum_i m_i$ .
\item[I4.] $U_{\lambda}(\delta^a) \otimes^{\mathbb{L}}_{U_{\lambda}} \mA_{\delta,\lambda} \rightarrow \mA_{\delta,\lambda}(\delta^a)$ is an isomorphism for all $1 \leq a \leq \sum_i m_i$.
\end{enumerate}
From the relation $\Lambda\delta^a \geq \Lambda \geq \Lambda \delta^a$ the multiplication maps 
$$U_{\lambda}(\delta^a) \otimes_{U_{\lambda}} B^{\Lambda,\Lambda \delta^a} \rightarrow U_{\lambda+ad\delta}$$
and 
$$B^{\Lambda,\Lambda \delta^a} \otimes_{U_{\lambda+ad\delta}} U_{\lambda}(\delta^a) \rightarrow U_{\lambda}$$
are surjective.\\ 

This gives that the maps in I1, I2, and I3 are surjective. On each $U_{f_i \neq 0}$ the maps in I2 and I3 are part of a Morita context.  It follows from results on Morita contexts (\cite[5.4]{McR}) that the maps I2 and I3 are isomorphisms because they are surjective.  $B^{\Lambda,\Lambda \delta^a}$ is part of a Morita equivalence so it is faithfully flat as a right $U_{\lambda+ad\delta}$-module.  Tensoring  I1 with $B^{\Lambda,\Lambda \delta^a}$ gives the factorization 
$$\mB_{\delta}^{\Lambda,\Lambda \delta^a} \cong B^{\Lambda,\Lambda \delta^a} \otimes_{U_{\lambda+ad\delta}} B^{\Lambda\delta^a,\Lambda} \otimes_{U_{\lambda}} \mB_{\delta}^{\Lambda,\Lambda \delta^a} \stackrel{B^{\Lambda,\Lambda \delta^a} \otimes (I1)}{\twoheadrightarrow}  B^{\Lambda, \Lambda \delta^a} \otimes_{U_{\lambda+ad\delta}} \mA_{\lambda+ad\delta} \rightarrow \mB_{\delta}^{\Lambda,\Lambda\delta^a}$$ of the identity map.  This proves that $I1$ is also injective and hence an isomorphism.\\

The arrow in I4 factors as
$$U_{\lambda}(\delta^a) \otimes_{U_{\lambda}} \mA_{\delta,\lambda} \cong U_{\lambda}(\delta^a) \otimes_{U_{\lambda}} \mB_{\delta}^{\Lambda,\Lambda\delta^a} \otimes_{\mA_{\delta, \lambda+ad\delta}} \mA_{\delta,\lambda}(\delta^a) \longrightarrow \mA_{\delta, \lambda+ad\delta} \otimes_{\mA_{\lambda+ad\delta}} \mA_{\delta,\lambda}(\delta^a)\cong  \mA_{\delta,\lambda}(\delta^a) $$
where the middle diagram is $(I1) \otimes \mA_{\delta,\lambda}(\delta^a)$.  This proves that $I4$ is an isomorphism.

\end{proof}

\end{exa}

\subsection{Example: $T^*\mathbb{P}^{n-1}$ and generalizations  in characteristic $p$}
Before proceeding to the general case of hypertoric varieties, we will first start with an example where the combinatorial data is easy to understand.  No results from this section will be referenced by the next section.  This subsection generalizes the results of \cite{VdB} to characteristic $p  >0 $. \\

Let $X=\mathbb{A}^n = Spec(\boldk[x_1,...,x_n])$, $\mD_X(X) = \boldk \langle x_1,...,x_n, \partial_1,...,\partial_n \rangle$, $s \in [1,n]$ an integer (the signature) and $G=\mathbb{G}_m$ acting on $X$ via $$t(x_1,...,x_s,x_{s+1},...,x_n) = (tx_1,...,tx_s,t^{-1}x_{s+1},...,t^{-1}x_n).$$
 on the points of the space and corresponding right regular action on functions.
Choose $\delta \in X^*(G)$ to be the character $t \mapsto t$.  For example, in the case when $s=n$ we obtain
$$\mu^{-1}(0) \git_{\delta} G \cong T^*\mathbb{P}^{n-1}.$$

For $\Lambda \in X^*(G)$ and $\lambda=d\Lambda$ the ideal $(\{\mu^*(\Theta)-\lambda(\Theta)\}_{\Theta \in \mathfrak{g}})$ is generated by the single element
$$\sum_{i=1}^s x_i\partial_i + \sum_{i=s+1}^n -x_i \partial_i - \lambda.$$

For assumption A5, take $f_1 = x_1.,..,f_s=x_s,f_{s+1}=\xi_{s+1},...,f_{n}=\xi_n$ then $m_i = 1$ for all $i$.\\

Clearly, $$B^{\Lambda \delta,\Lambda} \otimes B^{\Lambda, \Lambda\delta} \ni \sum_{i=1}^{s} x_i \otimes \partial_i + \sum_{i=s+1}^n -\partial_i \otimes x_i \mapsto \sum_{i=1}^s x_i\partial_i + \sum_{i=s+1}^n- x_i \partial_i - n + s \in U_{\lambda+d\delta}$$

and

$$B^{\Lambda,\Lambda\delta} \otimes B^{\Lambda\delta, \Lambda} \ni \sum_{i=1}^{s} \partial_i \otimes x_i + \sum_{i=s+1}^n -x_i \otimes \partial_i \mapsto \sum_{i=1}^s x_i\partial_i + \sum_{i=s+1}^n -x_i \partial_i + s \in U_{\lambda}.$$

We see that if $\lambda + s$ and $\lambda+s -n$ are both invertible modulo $p$ then $\Lambda\delta \leq \Lambda \leq \Lambda\delta$. \\

Therefore, whenever $\lambda \notin \{-s+n-j, -j-s\}_{0 \leq j < n}$ we have that $\Lambda\delta^a \leq \Lambda \leq \Lambda\delta^a$ for all $1 \leq a \leq n$.  By \ref{enoughmorita} for any $\Lambda$ with $\lambda \notin \{-s+n-j, -j-s\}_{0 \leq j < n}$ there is an equivalence of categories
$$D^b(Mod_{f.g.}(U_{\lambda})) \cong D^b(Mod_{l.f.g.}(\mA_{\delta,\lambda})).$$

If $p > 2n$ then there are at least $p-2n$ such $\Lambda \in X^*(G)$ where derived equivalence holds at $d\Lambda$.

\subsection{Example: Hypertoric varieties in characteristic $p >0$}

In this subsection we will analyze the case when $X=\mathbb{A}^n$ and $G \subset T=\mathbb{G}_m^n$ is a subtorus acting on $X$ via the natural action of $T$ on $X$. To avoid the trivial cases, we will assume $0 < dim(G) < n$.  The GIT quotients $\mu^{-1}(0) \git_{\delta} G$ arising in this manner are called hypertoric varieties.  The action of $G$ on $X$ is entirely specified by the induced morphism of groups $\pi: X_*(T) \rightarrow X_*(T/G)$ which independent of the base field $\boldk$.  For simplicity we will further assume that the induced morphism is unimodular.\footnote{Here we mean any set of linearly independent columns is part of a $\mathbb{Z}$-basis for the codomain.} This setting was studied in characteristic zero in \cite{MVdB}.\\

\subsubsection{Notation.}
\begin{itemize}
\item $\langle \hspace{5pt}, \hspace{5pt} \rangle$ denotes the natural pairing between a space and its dual.
\item  $\widetilde{T}$ denotes the group $\mathbb{G}_m^{2n}$ acting naturally  on $\mathbb{A}^{2n} \cong T^*X$.  There are obvious inclusions $G \subset T \subset \tilde{T}$.
\item  $\{\tilde{e}_i\}_{i=1}^{2n}$ is the standard basis for $X_*(\tilde{T})$.
\item For $I \subset \{1,...,n\}$ define $\mathbb{G}_n^{|I|}  \cong T_I \subset T \subset \tilde{T}$ as the subtorus generated by the one parameter subgroups $\tilde{e}_i-\tilde{e}_{n+i}$ for all $i \notin I$.  
\item For any $I$ such that $X_*(G) \cap X_*(T_I)$ is rank $1$ we define the wall
$$W_I = \{\chi \in X^*(G) | \langle \chi, X_*(G) \cap X_*(T_I) \rangle = 0\}.$$
\end{itemize}

\subsubsection{Definition of smooth parameter.}
We will say that $\delta \in X^*(G)$ is a smooth parameter if $G$ acts freely on $\mu^{-1}(0)$.

\subsubsection{Construction of the Koszul sequence.}
In order to utilize \ref{mainthm} we first need to fix a sequence generating the unstable locus $T^*X^{\delta-uns}$ up to radical.\\

 Consider the polyhedron
$$P_{\delta} = \{ \tilde{\chi} \in X^*(\tilde{T})_{\mathbb{R}} \text{: } \tilde{\chi}|_G = \delta \text{ and } \langle \tilde{\chi}, \tilde{e_i} \rangle \geq 0 \text{ for all } 1 \leq i \leq 2n\}.$$

 For any integral character $\tilde{\chi} \in P_{\delta}$ define
 $$\tilde{f}_{\tilde{\chi}} = \prod_{1 \leq i \leq n}x^{\langle \tilde{\chi}, \tilde{e_i} \rangle}_i \prod_{1 \leq i \leq n} \partial_i^{\langle \tilde{\chi}, \tilde{e}_{i+n} \rangle}.$$

The vertices of $P_{\delta}$ are integral because the map $X_*(T) \rightarrow X_*(T/G)$ is unimodular.  Label them $v_1(\delta),...,v_{s_{\delta}}(\delta)$.  The regular functions $f_{v_1(\delta)},...,f_{v_s(\delta)}$ (the principal symbols of $\tilde{f}_{v_1(\delta)},....,\tilde{f}_{v_s(\delta)})$ generate the ideal defining the unstable locus $T^*X^{\delta-uns}$ \cite[2.6]{HaSturm}.  \\

\textbf{The defining sequence.} For any $\delta \in X^*(G)$ that is a smooth parameter we will choose $\tilde{f}_{v_1(\delta)},....,\tilde{f}_{v_{s_{\delta}}(\delta)}$ for the data required by \ref{Koszulcomplex} for \ref{mainthm}.  Notice that in this case $m_i=1$ for all $ 1 \leq i \leq s_{\delta}$. \\

\subsubsection{Explicit lower bounds on $p$.}

For each vertex $v_i(\delta)$  we define $N_i(\delta)=\max_j | \langle v_i(\delta), \tilde{e}_j \rangle |$ and $N(\delta) = max_i N_i(\delta)$.
Further more we define $N = \min_{\delta} N(\delta)$ as $\delta \in X^*(G)$ ranges over those $\delta$ not contained in any $W_I$ for $rank(X_*(G) \cap X_*(T_I))=1$  nor in any $Span_{\mathbb{Z}}\{\tilde{e_i}|_G \}_{i \in I}$ with $|I| < dim \mathfrak{g}$.  
We will prove the following in \ref{pproofs}.

\begin{enumerate}
\item If $p > \binom{n}{dim\mathfrak{g}-1}(2N)^{n-dim\mathfrak{g}+1}$ then there is a smooth parameter $\delta \in X^*(G)$ and $\Lambda \in X^*(G)$ such that $\Lambda \leq \Lambda \delta$.
\item If $p > 2\binom{n}{dim\mathfrak{g}}\binom{n}{dim\mathfrak{g}-1}\left(2\binom{n}{dim\mathfrak{g}-1}N+1\right)^{n-dim\mathfrak{g}+1}$ then there is a smooth parameter $\delta \in X^*(G)$ and  $\Lambda \in X^*(G)$ such that localization holds at $\lambda= d\Lambda$.
\end{enumerate}

\begin{rem}\label{easybndrmk}It is easy to find an upper bound for these estimates because  $N$ is defined as a minimal value. 
\end{rem}
\subsubsection{Confirmation of the axioms from \ref{assumptions} when $\delta$ is a smooth parameter.}

We will show that for any choice of smooth parameter $\delta \in X^*(G)$ all of the axioms from \ref{assumptions} are satisfied.\\

Suppose $\delta$ is a smooth parameter and $\Lambda \in X^*(G)$.   Only A4 of \ref{assumptions} remains to be confirmed.  We need to check that the cohomology sheaves $H^i(\mu^{-0}\git_{\delta}G, \mA_{\delta, d\Lambda}(\delta^m))$ vanish for $i > 0$ and $m \geq 0$  As explained in \cite[Sec 4]{Sta},  the ideal $\{\mu^*(\theta)-\lambda(\theta)\}_{\theta \in \mathfrak{g}^*}$ is generated by a regular sequence it so it is enough to check that $H^i(T^*X\git_{\delta} G, \mD_{T^*X^{\delta-ss}}^{\delta^m}) = 0$ for all $ i > 0$ and $m \geq 0$.  Recalling that $\mD_X$ is naturally filtered with a PBW filtration such that $Gr(\mD_{T^*X^{\delta-ss}}) = \mO_{T^*X\git_{\delta}G}(\delta^m)$ the usual spectral sequence arguments imply that it suffices to show the higher cohomology sheaves of $\mO_{T^*X\git_{\delta}G}(\delta^m)$ vanish.  It was shown in \cite[Sec 4]{Sta} that the variety $T^*X\git_{\delta}G$ is Frobenius split.   A quick argument  (see \cite[Intro]{MR}) shows that the ample line bundle $\mO_{T^*X\git_{\delta}G}(\delta^m)$  on the Frobenius split variety $T^*X \git_{\delta} G$ has vanishing higher cohomology groups.\\

\subsubsection{Proofs of the explicit bounds on $p$.}\label{pproofs}

\begin{pro} If $p > \binom{n}{dim\mathfrak{g}-1}(2N)^{n-dim\mathfrak{g}+1}$ then there exists a smooth parameter $\delta \in X^*(G)$ and $\Lambda \in X^*(G)$ such that $\Lambda \leq \Lambda \delta$.
\end{pro}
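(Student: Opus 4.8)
The plan is to unwind the relation $\Lambda \leq \Lambda\delta$ into an explicit numerical condition on $\Lambda$ modulo $p$ and then to show that the condition is satisfiable by a counting argument; the role of the bound on $p$ is exactly to make the count go through.

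First I would reduce to a statement about putting a scalar into an image. By definition $\Lambda \leq \Lambda\delta$ means the multiplication map $m\colon B^{\Lambda,\Lambda\delta}\otimes_{U_{\lambda+d\delta}}B^{\Lambda\delta,\Lambda}\to U_\lambda$ is surjective. Since $m$ is a homomorphism of $(U_\lambda,U_\lambda)$-bimodules its image is a two-sided ideal, so it is enough to produce a unit of $U_\lambda$ — for instance a nonzero scalar — in the image of $m$. Fix once and for all a smooth parameter $\delta \in X^*(G)$ realizing the minimum $N(\delta) = N$; by the previous subsection all of Assumptions \ref{assumptions} hold for this $\delta$. The vertex monomials $\tilde f_{v_1(\delta)},\dots,\tilde f_{v_{s_\delta}(\delta)}$ built in the construction of the Koszul sequence (with $m_i = 1$) have $G$-weight $\delta$, hence descend to elements of $B^{\Lambda\delta,\Lambda}$; their Weyl-algebra transposes $\tilde f_{\sigma v_i(\delta)}$, obtained by interchanging the $x_j$- and $\partial_j$-exponents (equivalently, reflecting $v_i(\delta)$ under $\tilde e_j \leftrightarrow \tilde e_{j+n}$), have $G$-weight $-\delta$ and descend to $B^{\Lambda,\Lambda\delta}$. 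Thus each elementary tensor $\tilde f_{\sigma v_a(\delta)}\otimes \tilde f_{v_b(\delta)}$ is sent by $m$ to the class in $U_\lambda$ of the ordinary product $\tilde f_{\sigma v_a(\delta)}\tilde f_{v_b(\delta)}$ of differential operators.

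Next I would carry out the key computation. Writing a product $\tilde f_{\sigma v_a}\tilde f_{v_b}$ in normal order introduces factors of the form $\prod_k (N_j + k)$ and $\prod_l (N_j - l)$ in the commuting Euler operators $N_j = x_j\partial_j$, with the number of factors bounded by the exponents occurring, hence by $N(\delta) = N$. Let $\gamma_j \in X^*(G)$ be the character by which $G$ acts on $x_j$; the moment relations read $\sum_j \langle \gamma_j,\theta\rangle N_j = \lambda(\theta)$ for $\theta \in \mathfrak{g}$, so any element of the commutative subalgebra $\boldk[N_1,\dots,N_n] \subset U_\lambda$ becomes a function on the affine space $\{N : \sum_j N_j \gamma_j = \lambda\}$. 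For each rank-$(\dim\mathfrak{g}-1)$ coordinate flat $F \subset \{1,\dots,n\}$ of the matroid of $\pi$ — equivalently each cocircuit $C = \{1,\dots,n\}\setminus F$, with $|C| \leq n-\dim\mathfrak{g}+1$ — choose a primitive $\theta_C \in X_*(G)$ with $\langle\gamma_j,\theta_C\rangle = 0$ for $j\notin C$, so that $\mu^*(\theta_C) = \sum_{j\in C} c_j N_j$ with all $c_j \neq 0$ and, in $U_\lambda$, $\mu^*(\theta_C) = \langle\Lambda,\theta_C\rangle$. Combining the vertex monomials of $P_\delta$ supported on $C$ and telescoping their products around the $2$-faces of $P_\delta$, one extracts from $m$ an element of the image equal to a product $z_C(\Lambda)$ of at most $n-\dim\mathfrak{g}+1$ affine-linear forms in $\langle\Lambda,\theta_C\rangle$, each with integer coefficients of absolute value at most $2N$ (here the unimodularity and the bound $N$ on the coordinates of the vertices of $P_\delta$ control the factors). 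Thus $\Lambda \leq \Lambda\delta$ holds whenever $z_C(\Lambda) \not\equiv 0 \pmod p$ for every such $C$.

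Finally I would count. For a fixed $C$, $z_C$ vanishes modulo $p$ only when $\langle\Lambda,\theta_C\rangle$ lies, modulo $p$, in a set of at most $(2N)^{n-\dim\mathfrak{g}+1}$ residues; since $\theta_C$ is primitive and $p > (2N)^{n-\dim\mathfrak{g}+1}$, each such residue class cuts out an affine hyperplane in $X^*(G)\otimes \mathbb{F}_p \cong \mathbb{F}_p^{\dim\mathfrak{g}}$, hence at most $p^{\dim\mathfrak{g}-1}$ values of $\Lambda$. Ranging over the at most $\binom{n}{\dim\mathfrak{g}-1}$ flats $F$, the $\Lambda$ failing the condition number at most $\binom{n}{\dim\mathfrak{g}-1}(2N)^{n-\dim\mathfrak{g}+1}p^{\dim\mathfrak{g}-1}$, which is strictly less than $p^{\dim\mathfrak{g}} = \#(X^*(G)\otimes\mathbb{F}_p)$ precisely when $p > \binom{n}{\dim\mathfrak{g}-1}(2N)^{n-\dim\mathfrak{g}+1}$. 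So for such $p$ a suitable $\Lambda \in X^*(G)$ exists. The main obstacle is the middle step: producing, for each cocircuit, an honest element of the image of $m$ that collapses to a scalar, and checking that the number and the size of its linear factors are governed by $n-\dim\mathfrak{g}+1$ and by $N$ — this is where the combinatorics of $P_\delta$ really does the work, everything else being bookkeeping.
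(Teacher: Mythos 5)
There is a genuine gap, and the paper in fact avoids the step you flag as ``the main obstacle.'' You propose to produce, for each cocircuit $C$, a single honest element $z_C(\Lambda)$ of the image of the multiplication map that collapses to a scalar, by ``telescoping products of vertex monomials around the $2$-faces of $P_\delta$.'' That construction is never carried out, and as you say yourself it is the crux; without it the argument is not a proof. (There is also a small logical wobble: if each $z_C(\Lambda)$ were separately in the image, you would only need \emph{one} of them to be a unit, not all of them, so the quantifier ``for every such $C$'' is off; but this is harmless for the union bound.)

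The paper sidesteps the need to exhibit a scalar at all. For each vertex $v_i$ it forms the product $\tilde g_{v_i}\tilde f_{v_i}\in\boldk[E_1,\dots,E_n]$ and notes that the common zero set of all the $\tilde g_{v_i}\tilde f_{v_i}$ inside $\mathfrak t^*(\mathbb{F}_p)$ is contained in $\bigcap_i\bigcup_{j\in S_i}\{\dots\}$, which has at most $\binom{n}{\dim\mathfrak g-1}(2N)^{n-\dim\mathfrak g+1}p^{\dim\mathfrak g-1}$ points (this is the only place the vertices' coordinate bound $N$ and the cocircuit combinatorics enter). Pushing this finite set forward by $\xi\mapsto\mu^\vee_\xi$ into $\mathfrak g^*(\mathbb{F}_p)$, the hypothesis on $p$ guarantees the complement in $\mathfrak g^*(\mathbb{F}_p)$ is nonempty, and one picks $\Lambda$ with $d\Lambda$ outside the image. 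Then the ideal of $\boldk[E_1,\dots,E_n]$ generated by $\{\mu^*(\theta)-\lambda(\theta)\}_\theta\cup\{\tilde g_{v_i}\tilde f_{v_i}\}_i$ is the unit ideal (the two closed subsets of $\mathfrak t^*$ are disjoint), and since everything here commutes, this $\boldk[E_1,\dots,E_n]$-linear combination of the $\tilde g_{v_i}\tilde f_{v_i}$ equal to $1$ descends to $U_\lambda$, where it lies in the two-sided ideal generated by the images of $\tilde g_{v_i}\otimes\tilde f_{v_i}$. That gives $\Lambda\leq\Lambda\delta$ directly, with no explicit $z_C$ needed. Your counting at the end reproduces the paper's bound, and your identification of $\delta$ with $N(\delta)=N$ and of the relevant $(\dim\mathfrak g-1)$-flats/cocircuits is the same combinatorics; what is missing is to replace the unproved ``telescoping'' construction with the Nullstellensatz-style argument just described.
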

\begin{proof}
Suppose that $p > \binom{n}{dim\mathfrak{g}-1}(2N)^{n-dim\mathfrak{g}+1}$.
Let $\delta$ be such that $N(\delta) = N$ where $\delta$ is not in any $W_I$ nor $Span_{\mathbb{Z}}\{\tilde{e}_i\}_I$ for $|I| < dim \mathfrak{g}$.  $\delta$ is a smooth parameter because $\delta \notin \cup_I W_I$ \cite[Sec 5]{Sta}.  Write $v_i$ for $v_i(\delta)$ (recall $v_i(\delta)$ are the vertices of $P_{\delta}$) and consider the elements
$$\tilde{f}_{v_i} = \prod_{1 \leq j \leq n}x_j^{\langle v_i, \tilde{e_j} \rangle} \prod_{1 \leq j \leq n} \partial_j^{\langle v_i, \tilde{e}_{j+n} \rangle} \in \mD_X(X)^{\delta}$$
$$\tilde{g}_{v_i} = \prod_{1 \leq j \leq n}\partial_j^{\langle v_i, \tilde{e_j} \rangle} \prod_{1 \leq j \leq n}x_j^{\langle v_i, \tilde{e}_{j+n} \rangle} \in \mD_X(X)^{\delta^{-1}}.$$

The product $\tilde{g}_{v_i}\tilde{f}_{v_i} \in \mD_X(X)$ is in the commutative $\boldk$-subalgebra generated by $E_i = x_i \partial_i$. We naturally identify $\boldk[E_1,...,E_n]$ with the ring of regular functions on $\mathfrak{t}^*$ and the inclusion into $\mD_X(X)$ is given by the moment map for the action of $T$ on $X$.  Utilizing the identities $x^m\partial^m = \prod_{k=0}^{m-1}(x\partial-k)$ and $\partial^m x^m = \prod_{k=1}^{m}(x\partial+k)$, we can express these products in terms of the generators of $\boldk[E_1,...,E_n]$ as
$$\tilde{f}_{v_i}\tilde{g}_{v_i} =  \prod_{1 \leq j \leq n}\prod_{k=0}^{\langle v_i, \tilde{e}_j \rangle -1} (E_j-k) \cdot \prod_{1 \leq j \leq n}\prod_{k=1}^{\langle v_i, \tilde{e}_{n+j} \rangle }( E_j+k)$$
$$\tilde{g}_{v_i}\tilde{f}_{v_i}  =  \prod_{1 \leq j \leq n}\prod_{k=1}^{\langle v_i, \tilde{e}_{j} \rangle }( E_j+k) \cdot \prod_{1 \leq j \leq n}\prod_{k=0}^{\langle v_i, \tilde{e}_{n+j} \rangle -1} (E_j-k)  $$

Any common root of $\tilde{g}_{v_i}\tilde{f}_{v_i}$ for $1 \leq i \leq s$ is in $\mathfrak{t}^*(\mathbb{F}_p)$.    As $\delta \notin Span_{\mathbb{Z}}\{\tilde{e}_i\}_I$ for $|I| < dim \mathfrak{g}$  

we have that the set $S_i = \{ j \in \{1,...,n\} | \langle v_i, \tilde{e}_j-\tilde{e}_{n+j} \rangle \neq 0\}$ has exactly $dim\mathfrak{g}$ elements and every set of $dim\mathfrak{g}$ elements shows up as some $S_i$.   Furthermore, the set of common roots of $\tilde{g}_{v_i}\tilde{f}_{v_i}$ for $1 \leq i \leq s$ is contained in the set
$$\bigcap_i \bigcup_{j \in S_i}  \left\{ \sum_{l=0}^n a_l d(\tilde{e}_l^{\vee}-\tilde{e}_{n+l}^{\vee}) | a_l \in \mathbb{Z}, 0 \leq a_l < p \text{ and } -N \leq a_j < N\right\} \subset \mathfrak{t}^*(\mathbb{F}_p).$$

This set has size at most $\binom{n}{dim\mathfrak{g}-1}(2N)^{n-dim\mathfrak{g}+1}p^{dim\mathfrak{g}-1}$.  We conclude that there are at most $\binom{n}{dim\mathfrak{g}-1}(2N)^{n-dim\mathfrak{g}+1}p^{dim\mathfrak{g}-1}$ common roots of  $\{\tilde{g}_{v_i}\tilde{f}_{v_i}\}_{i=1}^s$.\\

For any $\xi \in\mathfrak{t}^*(\mathbb{F}_p)$ let $\mu^{\vee}_{\xi} \in \mathfrak{g}^*(\mathbb{F}_p)$ denote the functional defined by
$$\theta \mapsto \mu^*(\theta)(\xi)$$
where $\mu^*(\theta) \in \boldk[E_1,...,E_n] \subset \mD_X(X)$ is considered as a regular function on $\mathfrak{t}^*$.

$p > \binom{n}{dim\mathfrak{g}-1}(2N)^{n-dim\mathfrak{g}+1}$ so the set 
$$g^*(\mathbb{F}_p)\setminus \{\mu^{\vee}_{\xi}| \xi \text{ is a common root of } \{\tilde{g}_{v_i}\tilde{f}_{v_i}\}_{i=1}^s\}$$
is non empty.  Choose any $\Lambda$ in $X^*(G)$ such that $d\Lambda \notin \{\mu^{\vee}_{\xi}| \xi \text{ is a common root of } \{\tilde{g}_{v_i}\tilde{f}_{v_i}\}_{i=1}^s\}$. For any common root $\xi$ of $\{\tilde{f}_{v_i}\tilde{g}_{v_i}\}_{i=1}^s$ there exists $\theta_\xi \in \mathfrak{g}^*$ such that  $\lambda(\theta_\xi) \neq \mu^{\vee}_{\xi}(\theta_\xi) = \mu^*(\theta_\xi)(\xi)$. $\boldk$ is algebraically closed so the ideal generated by the set $\{\mu^*(\theta)-\lambda(\theta)\}_{\theta \in \mathfrak{g}^*} \cup \{\tilde{g}_{v_i}\tilde{f}_{v_i}\}_{i=1}^s \subset \boldk[E_1,...,E_n]$ is the unit ideal. The two-sided $U_{\lambda}$ ideal generated by the images of $\tilde{g}_{v_i} \otimes \tilde{f}_{v_i} \in B^{\Lambda, \Lambda\delta} \otimes B^{\Lambda\delta,\Lambda}$ $(1 \leq i \leq s)$ in $U_{\lambda}$ contains $1$.  It follows that that $\Lambda \leq \Lambda\delta$.
\end{proof}

\begin{thm}\label{mainapp} If $p > 2\binom{n}{dim\mathfrak{g}}\binom{n}{dim\mathfrak{g}-1}\left(2\binom{n}{dim\mathfrak{g}-1}N+1\right)^{n-dim\mathfrak{g}+1} $ then there exists a smooth parameter $\delta \in X^*(G)$ such that the set of integral weights $\lambda \in \mathfrak{g}^*(\mathbb{F}_p)$ for which derived localization holds is non-empty.
\end{thm}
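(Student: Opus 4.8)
The plan is to deduce the theorem from the Morita criterion \ref{enoughmorita}. In the hypertoric case every $m_i$ equals $1$, so the range $1\le a<\frac{\sum_i m_i}{m_1}$ appearing there becomes $1\le a\le s_\delta$, where $s_\delta$ is the number of vertices of $P_\delta$; moreover the axioms \ref{assumptions}~A1--A5 hold for every smooth parameter $\delta$ by the preceding subsection. Hence it suffices to produce a smooth parameter $\delta\in X^*(G)$ and a weight $\Lambda\in X^*(G)$ with $d\Lambda\in\mathfrak g^*(\mathbb F_p)$ for which $\Lambda\delta^a\le\Lambda\le\Lambda\delta^a$ for all $1\le a\le s_\delta$; then \ref{enoughmorita} gives the asserted derived equivalence at $\lambda=d\Lambda$. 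Exactly as in the previous proposition I would take $\delta$ to realize the minimum $N(\delta)=N$ among characters that avoid all the walls $W_I$ and all the spans $\mathrm{Span}_{\mathbb Z}\{\tilde e_i|_G\}_{i\in I}$ with $|I|<\dim\mathfrak g$; such a $\delta$ is automatically a smooth parameter, and from the combinatorics of $P_\delta$ (each support $S_i$ has exactly $\dim\mathfrak g$ elements and every $\dim\mathfrak g$-subset occurs as some $S_i$) one gets $s_\delta\le\binom{n}{\dim\mathfrak g}$.

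For a fixed $a$, the two relations $\Lambda\le\Lambda\delta^a$ and $\Lambda\delta^a\le\Lambda$ amount to surjectivity of the multiplication maps $B^{\Lambda,\Lambda\delta^a}\otimes_{U_{\lambda+ad\delta}}B^{\Lambda\delta^a,\Lambda}\to U_\lambda$ and $B^{\Lambda\delta^a,\Lambda}\otimes_{U_\lambda}B^{\Lambda,\Lambda\delta^a}\to U_{\lambda+ad\delta}$ respectively. I would witness these by elements built from the vertex monomials: for a suitable finite family of integral weights $w\in aP_\delta$ — sums of $a$ of the vertices $v_1(\delta),\dots,v_{s_\delta}(\delta)$, chosen so that every $\dim\mathfrak g$-subset still appears inside one of the supports and so that no coordinate $j$ lies in more than $\binom{n-1}{\dim\mathfrak g-1}$ of them — one forms $\tilde f_w\in\mD_X^{\delta^a}$ and $\tilde g_w\in\mD_X^{\delta^{-a}}$ exactly as before. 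Using $x^m\partial^m=\prod_{k=0}^{m-1}(E-k)$, $\partial^m x^m=\prod_{k=1}^{m}(E+k)$ (with $E_i=x_i\partial_i$) together with $x^cg(E)=g(E-c)x^c$ and $\partial^cg(E)=g(E+c)\partial^c$, one rewrites the products $\tilde f_w\tilde g_w$ and $\tilde g_w\tilde f_w$ as elements of the commutative subalgebra $\boldk[E_1,\dots,E_n]\subset\mD_X(X)$, each a product over $j$ of a one-variable polynomial in $E_j$ whose roots are integers confined to a box $\{-M,\dots,M\}$ with $M\le\binom{n}{\dim\mathfrak g-1}N$. The relevant multiplication map is surjective provided the ideal of $\boldk[E_1,\dots,E_n]$ generated by these products together with $\{\mu^*(\theta)-\lambda(\theta)\}_{\theta\in\mathfrak g}$ (resp. $\{\mu^*(\theta)-\lambda(\theta)-ad\delta(\theta)\}_{\theta\in\mathfrak g}$) is the unit ideal; since $\boldk$ is algebraically closed this is equivalent to there being no common zero $\xi\in\mathfrak t^*$, i.e. to $d\Lambda\ne\mu^\vee_\xi$ (resp. $d\Lambda+ad\delta\ne\mu^\vee_\xi$) for every $\xi$ in the common zero locus of the chosen products.

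That common zero locus is a finite set of $\mathbb F_p$-points of $\mathfrak t^*$: because the supports of the chosen products cover all $\dim\mathfrak g$-subsets of $\{1,\dots,n\}$, any common zero has at most $\dim\mathfrak g-1$ coordinates outside $\{-M,\dots,M\}$, so it — and hence also its image under $\xi\mapsto\mu^\vee_\xi$ in $\mathfrak g^*(\mathbb F_p)$ — has at most $\binom{n}{\dim\mathfrak g-1}(2M+1)^{n-\dim\mathfrak g+1}p^{\dim\mathfrak g-1}$ elements. Running over all $1\le a\le s_\delta$ and over both directions, the set of $d\Lambda\in\mathfrak g^*(\mathbb F_p)$ for which some needed surjectivity fails has size at most $2s_\delta\binom{n}{\dim\mathfrak g-1}(2M+1)^{n-\dim\mathfrak g+1}p^{\dim\mathfrak g-1}$, which is bounded by $2\binom{n}{\dim\mathfrak g}\binom{n}{\dim\mathfrak g-1}\bigl(2\binom{n}{\dim\mathfrak g-1}N+1\bigr)^{n-\dim\mathfrak g+1}p^{\dim\mathfrak g-1}$. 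Since $|\mathfrak g^*(\mathbb F_p)|=p^{\dim\mathfrak g}$, this is a proper subset as soon as $p$ exceeds the bound in the statement, so I may choose $\Lambda$ with $d\Lambda$ outside it; for that $\Lambda$ every relation $\Lambda\delta^a\le\Lambda\le\Lambda\delta^a$ ($1\le a\le s_\delta$) holds, and \ref{enoughmorita} completes the proof.

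The step I expect to be the main obstacle is the choice of the weight-$\delta^a$ family in the second paragraph: one needs, uniformly in $a$, monomials $\tilde f_w$ whose products with the corresponding $\tilde g_w$ both (i) stay inside $\boldk[E_1,\dots,E_n]$ with roots in a box of size $O\bigl(\binom{n}{\dim\mathfrak g-1}N\bigr)$, and (ii) have a common zero locus supported on points having at most $\dim\mathfrak g-1$ free coordinates. Property (ii) is exactly the combinatorial input that makes the previous proposition work for the single relation $\Lambda\le\Lambda\delta$, and the point is to check that the covering property of the vertex supports $S_i$ survives passage to the dilations $\delta^a$ — this is where the unimodularity of $\pi\colon X_*(T)\to X_*(T/G)$, and the explicit formula for $\tilde f_{\tilde\chi}$, enter.
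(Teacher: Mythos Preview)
Your overall architecture---choose $\delta$ with $N(\delta)=N$, bound $s_\delta\le\binom{n}{\dim\mathfrak g}$, produce for each $1\le a\le s_\delta$ witnesses to both surjectivities, count the bad $d\Lambda$'s via common zeros in $\boldk[E_1,\dots,E_n]$, and invoke \ref{enoughmorita}---is exactly the paper's. The one place you diverge is the construction you flag as the ``main obstacle'': you propose building weight-$\delta^a$ monomials from carefully chosen sums $w=v_{i_1}+\cdots+v_{i_a}$ of distinct vertices so as to control both the supports and the coordinate sizes. The paper sidesteps this entirely by taking the trivial choice $w=av_i$, i.e.\ $\tilde f^{\,a}_{v_i}=\prod_j x_j^{a\langle v_i,\tilde e_j\rangle}\prod_j\partial_j^{a\langle v_i,\tilde e_{n+j}\rangle}$ and the analogous $\tilde g^{\,a}_{v_i}$. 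Since the support of $av_i$ is literally $S_i$, your property~(ii) (every $\dim\mathfrak g$-subset occurs among the supports) is inherited verbatim from the $a=1$ case, and property~(i) is immediate with the box radius $aN$; no combinatorial work or appeal to unimodularity is needed at this step. With that simplification the remainder of your argument---the products $\tilde f^{\,a}_{v_i}\tilde g^{\,a}_{v_i}$ and $\tilde g^{\,a}_{v_i}\tilde f^{\,a}_{v_i}$ in $\boldk[E_1,\dots,E_n]$, the counting of common zeros, the avoidance of finitely many $\mu^\vee_\xi$ and $\mu^\vee_\xi-ad\delta$, and the final appeal to \ref{enoughmorita}---coincides with the paper's proof.
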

\begin{proof}
We proceed in a manner similar to the previous proof. Let $\delta$ be such that $N(\delta) = N$ where $\delta$ is not in any $W_I$ nor $Span_{\mathbb{Z}}\{\tilde{e}_i\}$ for $|I| < dim \mathfrak{g}$.  Write $v_i$ for $v_i(\delta)$ and consider the elements
$$\tilde{f^a}_{v_i} = \prod_{1 \leq j \leq n}x_j^{a\langle v_i, \tilde{e_j} \rangle} \prod_{1 \leq j \leq n} \partial_j^{a\langle v_i, \tilde{e}_{j+n} \rangle} \in \mD_X(X)^{\delta}$$
$$\tilde{g^a}_{v_i} = \prod_{1 \leq j \leq n}\partial_j^{a\langle v_i, \tilde{e_j} \rangle} \prod_{1 \leq j \leq n}x_j^{a\langle v_i, \tilde{e}_{j+n} \rangle} \in \mD_X(X)^{\delta^{-1}}$$
for any $1 \leq a \leq \binom{n}{dim\mathfrak{g}}$.\\

The products $\tilde{f}_{v_i}\tilde{g}_{v_i}$ and $\tilde{g}_{v_i}\tilde{f}_{v_i} \in \mD_X(X)$ are in the commutative $\boldk$-subalgebra generated by $E_i = x_i \partial_i$. We again use the identities $x^m\partial^m = \prod_{k=0}^{m-1}(x\partial-k)$ and $\partial^m x^m = \prod_{k=1}^{m}(x\partial+k)$  to express these product in terms of the generators $E_i$ as

$$\tilde{f^a}_{v_i}\tilde{g^a}_{v_i} =  \prod_{1 \leq j \leq n}\prod_{k=0}^{a\langle v_i, \tilde{e}_j \rangle -1} (E_j-k) \cdot \prod_{1 \leq j \leq n}\prod_{k=1}^{a\langle v_i, \tilde{e}_{n+j} \rangle }( E_j+k)$$
$$\tilde{g^a}_{v_i}\tilde{f^a}_{v_i}  =  \prod_{1 \leq j \leq n}\prod_{k=1}^{a\langle v_i, \tilde{e}_{j} \rangle }( E_j+k) \cdot \prod_{1 \leq j \leq n}\prod_{k=0}^{a\langle v_i, \tilde{e}_{n+j} \rangle -1} (E_j-k)  $$

For fixed $a \leq \binom{n}{dim\mathfrak{g}}$ the set of common roots of  $\{\tilde{f^a}_{v_i}\tilde{g^a}_{v_i}\}_{i=1}^{s}$ is contained in the set
$$\bigcap_i \bigcup_{j \in S_i} = \left\{ \sum_{l}a_l d(\tilde{e}_l^{\vee}-\tilde{e}_{n+l}^{\vee}) | a_l \in \mathbb{Z}, 0 \leq a_l < p \text{ and }- \binom{n}{dim\mathfrak{g}-1}N \leq a_j \leq \binom{n}{dim\mathfrak{g}-1}N\right\}.$$
This set also contains the set of all common roots of  $\{\tilde{g^a}_{v_i}\tilde{f^a}_{v_i}\}_{i=1}^{s}$ for fixed $a \leq \binom{n}{dim\mathfrak{g}}$.  This set has at most $\binom{n}{dim\mathfrak{g}-1}\left(2\binom{n}{dim\mathfrak{g}-1}N+1\right)^{n-dim\mathfrak{g}+1}p^{dim\mathfrak{g}-1}$ elements.\\

For $\xi \in \mathfrak{t}^*$ we let $\mu^{\vee}_{\xi}$ be as in the proof of the previous proposition.  The set
$$ \cup_{1 \leq a \leq s} \{ \mu_{\xi}^{\vee}| \xi \text{ is a c. root of } \{\tilde{g^a}_{v_i}\tilde{f^a}_{v_i}\}_{i=1}^{s}\} \bigcup  \cup_{1 \leq a \leq s} \{ \mu_{\xi}^{\vee} - d\delta^a | \xi \text{is a c. root of } \{\tilde{f^a}_{v_i}\tilde{g^a}_{v_i }\}_{i=1}^{s}\}$$
has at most $2\binom{n}{dim\mathfrak{g}}\binom{n}{dim\mathfrak{g}-1}\left(2\binom{n}{dim\mathfrak{g}-1}N+1\right)^{n-dim\mathfrak{g}+1}p^{dim\mathfrak{g}-1}$ elements. Therefore as long as $p > 2\binom{n}{dim\mathfrak{g}}\binom{n}{dim\mathfrak{g}-1}\left(2\binom{n}{dim\mathfrak{g}-1}N+1\right)^{n-dim\mathfrak{g}+1}$ we can choose $\Lambda$  such that $d\Lambda$ is not in this set.\\

Referencing the proof of the previous proposition, we have $\Lambda \leq \Lambda \delta^a$ for all $1 \leq a \leq \binom{n}{dim \mathfrak{g}}$.  Moreover, similar to the previous proof for fixed $a$ the images of
$\tilde{f^a}_{v_i} \otimes \tilde{g^a}_{v_i } \in B^{\Lambda \delta^a, \Lambda} \otimes B^{\Lambda, \Lambda \delta^a}$ $(1 \leq i \leq s)$ generate the trivial ideal in $U_{\lambda + d\delta^a}$ because for every common root $\xi$ of $\{\tilde{f^a}_{v_i}\tilde{g^a}_{v_i }\}_{i=1}^{s}$  we have that $\mu^{\vee}_{\xi}(\theta_\xi) \neq \lambda(\theta_\xi) + d\delta^a(\theta_\xi)$ for some $\theta_\xi \in \mathfrak{g}$.  This yields that $\Lambda \delta^a \leq \Lambda \leq \Lambda \delta^a$ for all $1 \leq a \leq s$ and hence localization holds at $\Lambda$ by \ref{enoughmorita}.

\end{proof}

\def\cprime{$'$} \def\cprime{$'$}

E-mail: tstadnik@msri.org
\end{document}